\documentclass[final]{siamltex1213}

\usepackage{amssymb}
\usepackage{amsmath}
\usepackage{bbm}

\renewcommand{\parallel}{\|}
\newtheorem{mydef}[theorem]{Definition}
\DeclareMathOperator*{\argmin}{\arg\!\min}
\def\F{{\mathcal{F}}}
\def\N{{\mathcal{N}}}

\def\T{{\mathcal{T}}}

\def\W{{\mathcal{W}}}
\def\E{{\mathcal{E}}}
\def\Ea{{\mathcal{E}^{\alpha}}}
\def\l{{\mathbf{l}}}
\def\u{{\mathbf{u}}}
\def\v{{\mathbf{v}}}
\def\w{{\mathbf{w}}}
\def\x{{\mathbf{x}}}
\def\y{{\mathbf{y}}}
\def\z{{\mathbf{z}}}
\def\q{{\mathbf{q}}}
\def\p{{\mathbf{p}}}
\def\RA{{\mathbb{R}_{A}^d}}
\def\CJ{{\mathbb{C}_{J}^d}}

\title{Rank deficiency of Kalman error covariance matrices in linear time-varying system with deterministic evolution}

\author{
Karthik S. Gurumoorthy\footnotemark[3]\ \footnotemark[2] 
\and  Colin Grudzien\footnotemark[4]  \footnotemark[5] \footnotemark[6]
\and  Amit Apte\footnotemark[3]\ \footnotemark[2] 
\and  Alberto Carrassi\footnotemark[5] \footnotemark[7]
\and  Christopher K. R. T. Jones\footnotemark[4] 
}

\begin{document}
\maketitle \slugger{sicon}{xxxx}{xx}{x}{x--x}

\renewcommand{\thefootnote}{\fnsymbol{footnote}}

\footnotetext[3]{International Center for Theoretical Sciences, Tata
  Institute of Fundamental Research, Bangalore, Karnataka, India}

\footnotetext[4]{Department of Mathematics, University of North
  Carolina, Chapel Hill, North Carolina, USA}

\footnotetext[5]{Nansen Environmental and Remote Sensing Center,
  Bergen, Norway.}

\footnotetext[2]{This work benefited from the support of the
  AIRBUS Group Corporate Foundation Chair in Mathematics of Complex
  Systems established in ICTS-TIFR.}
  
 \footnotetext[6]{This work was partially funded by the project REDDA of the Norwegian Research Council under contract 250711.}
 
  \footnotetext[7]{This work was supported by the EU-FP7 projects
  SANGOMA under grant contract 283580 and also with the help of funding from the 
  centre of excellence EmblA of the Nordic Countries research council - NordForsk.}

\begin{abstract} We prove that for linear, discrete, time-varying, deterministic system (perfect model) with noisy outputs, 
the Riccati transformation in the Kalman filter 
  asymptotically bounds the rank of the forecast and the analysis
  error covariance matrices to be less than or equal to the number of
  non-negative Lyapunov exponents of the system. Further, the support of
  these error covariance matrices is shown to be confined to the space
  spanned by the unstable-neutral backward Lyapunov vectors, providing the
  theoretical justification for the methodology of the algorithms that
  perform assimilation only in the unstable-neutral subspace. The equivalent
  property of the autonomous system is investigated as a special case.
\end{abstract}
\begin{keywords} Kalman filter; data assimilation; linear dynamics;
  control theory; covariance matrix; rank
\end{keywords}
\begin{AMS} 93E11; 93C05; 93B05; 60G35; 15A03
\end{AMS}

\pagestyle{myheadings} \thispagestyle{plain}
\markboth{KARTHIK GURUMOORTHY et al.}{RANK DEFICIENCY OF ERROR COVARIANCE MATRICES}

\section{Introduction}

The problem of estimating the state of an evolving system from an
incomplete set of noisy observations is the central theme of the
state estimation and optimal control theory
\cite{Jazwinski}, also referred to as data assimilation (DA) in geosciences \cite{GhilMalanotte1991,Talagrand1997}.
In the filtering procedure, based on the concept of recursive processing,
measurements are utilized sequentially, as they become available
\cite{Jazwinski}. For linear dynamics, and when a linear relation
exists between measurements and the state variables, and when the
errors associated to all sources of information are Gaussian, the solution can be expressed via the Kalman filter (KF) equations
\cite{Kalman1960}. The KF provides a closed set of equations for the
first two moments of the posterior probability density function of the
system state, conditioned on the observations. 
In the case of nonlinear dynamics, the first
order extension of the KF is known as the extended Kalman filter
(EKF) \cite{Jazwinski}, whereas a Monte Carlo approximation
is the basis of a set of methods known as Ensemble Kalman filter
(EnKF) both of which have been studied extensively in geophysical contexts \cite{Miller1994, Evensen}. 
 
Atmosphere and ocean are example of dissipative chaotic systems. This
implies the sensitivity to initial condition \cite{Lorenz1963} and the
fact that the estimation error strongly projects on the unstable
manifold of the dynamics \cite{Pires-et-al-1996} which has inspired the development of a class of
algorithms known as assimilation in the unstable subspace (AUS)
\cite{TrevisanUboldi2004}. In AUS, the span of the leading Lyapunov
vectors (to be defined precisely in later sections), or a suitable
approximation of this span, is used explicitly in the analysis step: the
analysis update is confined to the unstable subspace
\cite{Palatella2013}. 
AUS has been formalized in the framework of the EKF,
EKF-AUS \cite{TrevisanPalatella2011}, and in the variational
(smoothing) procedure, 4DVar-AUS \cite{Trevisan2010}. 
Applications with atmospheric, oceanic, and
traffic models \cite{UboldiTrevisan2006, Carrassi-etal2008,
  PalatellaTrevisanRambaldi2013} showed that even in high-dimensional
systems, an efficient error control is achieved by monitoring only a
limited number of unstable directions, making AUS a computationally 
efficient alternative to standard procedures.
The AUS methodology is based on and at the same
time hints at a fundamental observation: the span of the estimation
error covariance matrices asymptotically (in time) tends to the
subspace spanned by the unstable-neutral Lyapunov vectors.

The search for a formal proof of this aforesaid property is the basic motivation of the present work which is
focused on linear non-autonomous, and linear autonomous perfect-model
dynamical systems. The main results of the paper are as
follows. In Theorem~\ref{thm:eigenvalueszero} we show that the error covariance matrices, independent of the initial condition, asymptotically become rank deficient in time and then in Theorem~\ref{thm:epsilonnull} we characterize their null spaces by proving that the restriction of the these matrices onto the stable backward Lyapunov vectors converges to zero in time. When restricted to the linear, autonomous system with the time invariant propagator $A$, we establish that the stable space of the time independent backward Lyapunov vectors equal the stable space of $A^T$---span of generalized eigen-vectors of $A^T$ corresponding to eigen-values less than one in absolute magnitude---in Theorem~\ref{thm:eigspaceequality}. Consequently, in Corollary~\ref{thm:nullspaceS1AT} we show that the null space of the error covariance matrices contain the stable space of $A^T$ asymptotically.

The paper is organized as follows. After describing the general
notation in Section~\ref{sec:notations}, the non-autonomous case is
considered in Section~\ref{sec:nonautosys}. The assumptions used in
proving our main result, other useful results such as the Oseledet
theorem, and the concepts of observability and controllability for
noiseless systems are described in Sections~\ref{sec:nonaut-setup},
\ref{sec:oseledet} and \ref{sec:control}. The Theorem~\ref{thm:eigenvalueszero} discussing the rank deficiency of error covariance matrices is presented in Section~\ref{sec:errcovrankdeficiency} and the proof of the Theorem~\ref{thm:epsilonnull} using the geometric viewpoint of
Kalman filtering \cite{Bougerol93, wojtowski2007, Bonnabel13}, is
detailed in Section~\ref{sec:errcovnullspace}. Section~\ref{sec:errcovnumer}
presents some numerical results buttressing the
theorem. Section~\ref{sec:autosys} includes the proof of
Corollary~\ref{thm:nullspaceS1AT} along with a numerical illustration
supporting the analytical findings for autonomous systems. We conclude in Section~\ref{sec:discussion}.

Although the extension of these results to the general
nonlinear case is the object of active research \cite{sanz-alonso2014},
the current findings already provide a formal justification to the AUS
foundation and further motivates its use as a DA 
strategy in nonlinear chaotic dynamics.

\section{Notations} \label{sec:notations}
The dimension of the state space is represented by $d$. For any square
matrix $Z \in \mathbb{C}^{d \times d}$ let the set $\{
\lambda_1(Z),\cdots,\lambda_d(Z)\}$ represent the eigen-values of $Z$
where $|\lambda_1(Z)| \geq \cdots \geq |\lambda_d(Z)|$. Similarly, let
the set $\{\sigma_1(Z),\cdots,\sigma_d(Z)\}$ stand for the
singular-values of $Z$ with $\sigma_1(Z) \geq \cdots \geq
\sigma_d(Z)$. We define the column vectors of the matrix $V_Z =
\left[\v_1(Z),\cdots,\v_d(Z)\right] $ to be the generalized
eigen-vectors of $Z$ of satisfying the relation $Z V_Z = V_Z J(Z)$ where
$J(Z)$ is the Jordan-canonical form of $Z$. In the event that $Z$ is
diagonalizable ($J(Z)$ is diagonal), let the entries of the diagonal
matrix $\Lambda_Z = J(Z)$ symbolize the eigen-values of
$Z$ and the columns of $V_Z$---the eigen-vectors---be of unit magnitude. $Z^{\ast}$ denotes the adjoint of $Z$ for the scalar product
under consideration in $\mathbb{C}^d$ and $Z^{\dagger}$ represents the
conjugate transpose of $Z$. For the canonical scalar product $\langle
\u,\v\rangle = \u^{\dagger} \v$ in $\mathbb{C}^d$, $Z^{\ast} =
Z^{\dagger}$ and when confined to the real space $\mathbb{R}^d$ where
$\langle \u,\v\rangle = \u^{T} \v$, $Z^{\ast} = Z^{T}$. Unless
explicitly stated we assume a real vector space endowed with a canonical scalar product. The matrix norm $\parallel Z \parallel$ we consider is the largest singular-value $\sigma_1(Z)$ of $Z$. 
The notation $Z>0$ ($Z \geq 0$) is used when $Z$ is symmetric, positive-definite (positive-semidefinite). For any two symmetric matrices $Z_1$, $Z_2$, the notation $Z_1 \geq Z_2$ means $Z_1 - Z_2 \geq 0$. 
The following definitions are useful.
\begin{mydef}[Real span]
\label{def:realspan} The real span of a complex vector $\w = \u + i
\v$ where $\u,\v \in \mathbb{R}^d$ is the vector space $\T_{\w}
\subset \mathbb{R}^d$ defined as
\begin{equation*}
  \T_{\w} \equiv \{ \alpha \u + \beta \v: \alpha,\beta
  \in \mathbb{R}\}.
\end{equation*}
\end{mydef}
\begin{mydef}[$\alpha$-eigenspace]
\label{def:alphaeigenspace} Given an $\alpha > 0$, the
$\alpha$-eigenspace of a square matrix $Z$ denoted by $\Ea(Z)$ is the
real span of the generalized eigenvectors of $Z$ corresponding to
eigen-values $\lambda$ with $|\lambda| < \alpha$.
\end{mydef}

\section{Non-autonomous systems} \label{sec:nonautosys}
\subsection{Set up and Assumptions} \label{sec:nonaut-setup} 
We define the general linear non-autonomous dynamical system at time
$n\geq 0$ by
\begin{align}
  \label{eq:nonautgen} \x_{n+1} &= A_{n+1} \x_n + F_{n+1} \p_{n+1} \\
  \y_{n+1} &= H_{n+1} \x_{n+1} + \q_{n+1} \nonumber
\end{align}
where $\x_n \in \mathbb{R}^d$, $\q_n\in \mathbb{R}^q$, $\p_n
\in\mathbb{R}^p$.  The $\x_n$ are the state variables, $\p_n$
represents model noise, $\y_n$ represents observational variables and
$\q_n$ is the observational noise term.  The basic random variables
$\{\x_0, \q_1, \q_2, \cdots ,\p_1,\p_2,\cdots\}$ are all assumed to be
independent and Gaussian with
\begin{equation*} \begin{matrix} \x_0 \sim \N\left(\x_{0 \mid 0},
\Delta_0\right) & \q_n \sim \N(0,Q_n) & \p_n \sim \N(0,I) \,,
\end{matrix}\end{equation*}
such that $\Delta_0 \in \mathbb{R}^{d\times d}$ is the initial error
covariance matrix of the state variable $\x_0$, $Q_n\in
\mathbb{R}^{q\times q}$ is the observation error covariance matrix at
time $n$, and $F_n \in \mathbb{R}^{d\times p}$. The matrices $\Delta_0, Q_n, F_n, A_n, H_n$ are known for all time $n$. Further, $A_n$ and $Q_n$ are considered to be non-singular, $\parallel A_n \parallel \leq c_A$,  $\parallel Q_n \parallel \leq c_Q$, and $\parallel H_n \parallel \leq c_H, \forall n\geq1$ where $c_A$, $c_Q$ and $c_H$ are positive constants. The model noise error covariance is given by $P_n \equiv F_n F_n^T$. Unless explicitly stated $\Delta_0 > 0$, i.e, its eigen-values are strictly positive.

Filtering theory deals with the properties of the conditional
distribution, called the \emph{analysis} in the context of 
DA, of the state $\x_n$ at time $n$ conditioned on
observations $Y_{0:n} = [\y_1, \y_2, ... , \y_n]$ up to time $n$ where the first observation $\y_1$
is assumed to occur at time $n=1$. This conditional distribution provides an optimal state estimate in the
least squares sense~\cite{Jazwinski}.  Under the assumptions of
linearity and Gaussianity stated above, this conditional distribution
is Gaussian, with mean and covariance denoted by $\x_{n \mid n}$ and
$\Delta_n$ respectively:
\begin{align*} \x_{n\mid n} = \mathbb{E}[\x_n \mid Y_{0:n}] \,,
\quad\textrm{and}\quad \Delta_n = \mathbb{E}[(\x_n - \x_{n \mid
n})(\x_n - \x_{n \mid n})^T \mid Y_{0:n} ] \,.
\end{align*}
We also note that the conditional distribution, called the
\emph{forecast} in DA literature, of the state
$\x_{n+1}$ conditioned on observations $Y_{0:n}$ up to
time $n$ is Gaussian with its mean and covariance denoted by $\x_{n+1
  \mid n}$ and $\Sigma_{n+1}$ respectively:
\begin{align*} \x_{n+1 \mid n} &= \mathbb{E}[\x_{n+1} \mid Y_{0:n}]
\,,\quad\textrm{and}\quad \Sigma_{n+1} = \mathbb{E}[(\x_{n+1} -
\x_{n +1 \mid n})(\x_{n+1} - \x_{n+1 \mid n})^T \mid Y_{0:n}] \,.
\end{align*}

In this work we concern ourselves with systems that have no model error, i.e, $F_n \equiv 0$
$\forall n \geq 1$ and investigate the dynamics
\begin{align}
\label{eq:nonaut} 
\x_{n+1} = A_{n+1} \x_n \,, \quad\textrm{and}\quad
\y_{n+1} = H_{n+1} \x_{n+1} + \q_{n+1} \,.
\end{align}
We will be interested in asymptotic properties
of the conditional error covariances $\Sigma_n$ and $\Delta_n$. The Kalman filter provides a closed form,
iterative formula for obtaining these quantities~\cite{Jazwinski}. Under
the assumption of no model noise, the update equation for the forecast
error covariance is
\begin{equation}
\label{eq:Sigmaupdate} \Sigma_{n} = A_{n} \Delta_{n-1} A_{n}^T \,.
\end{equation}
By defining the Kalman gain matrix $K_n$ as
\begin{equation}
\label{eq:KalmanGainupdate} K_n \equiv \Sigma_n H_n^T \left[H_n \Sigma_n H_n^T +Q_n\right]^{-1} \,,
\end{equation}
the analysis error covariance equals
\begin{equation}
\label{eq:Deltaupdate} \Delta_n = (I - K_n H_n)\Sigma_n \,.
\end{equation}
The update equations for the means are given by
\begin{align}
\label{eq:forecastMeanUpdate}
\x_{n+1 \mid n} &= A_{n+1} \x_{n \mid n} \\
\label{eq:analysisMeanUpdate}
\x_{n+1 \mid n+1} &= \x_{n+1 \mid n} + K_{n+1}\left(y_{n+1} - H_{n+1} \x_{n+1 \mid n}\right).
\end{align}

\noindent Defining the sequence of matrices $M_n$ as
\begin{equation} 
\label{eq:Mn} 
M_1 \equiv (I - K_1 H_1) A_1, \quad M_n \equiv (I - K_n H_n) A_n M_{n-1}
\end{equation} 
and writing the propagator $B_{m:m+n}$ from time $m$ to time $m+n$ by
\begin{align}
\label{eq:Bn} B_{m:m+n} &\equiv A_{m+n} A_{m+n-1} \cdots A_{m+1},
\end{align}
the analysis covariance at time $n$ can be expressed as
\begin{align}
\label{eq:MnUpdate} \Delta_n =& (I-K_n H_n)A_n \cdots (I - K_1 H_1)
A_1 \Delta_0 A_1^T \cdots A_n^T = M_n \Delta_0 B_{0:n}^T.
\end{align}
This equation clearly shows that the asymptotic properties of
$\Delta_n$ are closely related to those of $B_{0:n}$ and $M_n$. 
The notation in equation~(\ref{eq:MnUpdate}) is suggestive of the line
of argument we will take in the following sections. To outline, we
may consider the singular value decomposition of the propagator
$B_{0:n}^T= V_n S_n U_n^T$, and decompose the error
covariances into a basis of the left singular vectors.  In particular,
we know that this decomposition may be written as a function of the
singular values, provided we have an appropriate bound on $M_n$ in
equation~(\ref{eq:MnUpdate}).  Moreover, the left singular vectors of
the propagator $B_{0:n}$ will become arbitrarily close to the
backwards Lyapunov vectors of the system.

The properties of $B_{0:n}$ are basically determined by the dynamical
system and are discussed in the next section, while those of $M_n$ are
commonly discussed in the context of control theory and are discussed
in Section~\ref{sec:control} where we prove a useful bound on its matrix norm in Lemma~\ref{lem:Mnbound}.

\subsection{Oseledet's theorem} \label{sec:oseledet}
Note that the boundedness condition on $A_n$ implies the bound $\parallel B_{0:n} \parallel \leq (c_A)^n, \forall n$. Then Oseledet's multiplicative ergodic theorem in \cite{Oseledets68} states that for each non-zero vector $\u \in \mathbb{R}^d$ the limit
\begin{equation*}
\mu = \lim\limits_{n \rightarrow \infty} \frac{1}{n} \log \frac{\parallel B_{0:n} \u \parallel }{\parallel \u \parallel}
\end{equation*}
exists and assumes up to $d$ distinct values $\mu_1 \geq \cdots \geq \mu_d$ which are called the Lyapunov exponents.  We will assume 
\begin{align}
0>\mu_{d_0+1}
\end{align}
so that exactly $d_0 < d$ of the Lyapunov exponents are non-negative. Further, defining the matrices
\begin{equation}
\label{eq:EbnEfn} E^b_n(m) \equiv \left[ B_{m-n:m}
(B_{m-n:m})^\ast\right]^{\frac{1}{2n}}, \quad
E^f_n(m) \equiv \left[ (B_{m:m+n})^\ast B_{m:m+n}
\right]^{\frac{1}{2n}},
\end{equation}
 Oseledet's theorem guarantees that the following limits exit, namely
\begin{align} 
\label{eq:EbnConvEb}
E^b(m) & \equiv\lim\limits_{n \rightarrow \infty} E^b_n(m), \\ 
\label{eq:EfnConvEf}
E^f(m) &\equiv\lim\limits_{n \rightarrow \infty} E^f_n(m).
\end{align}
The eigen-vectors of $E^b(m)$ and $E^f(m)$ represented as the column
vectors of $L^b(m) = [\l^b_1(m),\cdots,\l^b_d(m)]$ and $L^f(m) =
[\l^f_1(m),\cdots,\l^f_d(m)]$ respectively are defined as the
\emph{backward} and the \emph{forward} Lyapunov vectors at time
$m$ \cite{Legras96}. We note that the asymptotic results in later sections will essentially use the backward Lyapunov vectors $L^b(m)$.

The convergence of the individual matrix entries in equations~(\ref{eq:EbnConvEb}) and (\ref{eq:EfnConvEf}) guarantee the convergence of their characteristic polynomials---whose coefficients are well-defined functions of the matrix entries---the roots of which are the eigen-values. Therefore,
\begin{equation*}
 \lim\limits_{n\rightarrow \infty} \Lambda_{E^b_n(m)} = \Lambda_{E^b(m)}, \quad \lim\limits_{n\rightarrow \infty} \Lambda_{E^f_n(m)} = \Lambda_{E^f(m)}
\end{equation*}
where we recall that $\Lambda_Z$ is a diagonal matrix comprised of eigen-values of $Z$. Using the notations from Section~\ref{sec:notations} we additionally find
\begin{align*}
\parallel \lambda_j\left(E^b(m)\right) \v_j\left(E^b_n(m)\right) - E^b(m) \v_j\left(E^b_n(m)\right) \parallel \leq & \left|\lambda_j\left(E^b(m)\right) - \lambda_j\left(E^b_n(m)\right)\right| \\
&+ \parallel E^b_n(m) -  E^b(m) \parallel
\end{align*}
from which we can infer that 
\begin{equation*}
\lim\limits_{n \rightarrow \infty} \parallel \lambda_j\left(E^b(m)\right) \v_j\left(E^b_n(m)\right) - E^b(m) \v_j\left(E^b_n(m)\right) \parallel = 0
\end{equation*}
leading to $\lim\limits_{n\rightarrow\infty} V_{E^b_n(m)} = V_{E^b(m)} =L^b(m)$. Similarly, $\lim\limits_{n\rightarrow \infty} V_{E^f_n(m)} = V_{E^f(m)}=L^f(m)$.

Oseledets theorem also asserts the eigen-values of $E^b(m)$ or $E^f(m)$
do not depend on the initial time $m$, are the same for the
forward and backward matrices and relate to the Lyapunov exponents as 
\begin{equation}
\label{def:LyapunovExp}
\mu_j = \log(\lambda_j(E)), \quad j \in \{1,\cdots,d\}
\end{equation}
where we deliberately drop the index $m$ and the superscript $b$ or
$f$ on $E$. However, the forward and backward Lyapunov vectors are different
from each other and they also depend on the time $m$, i.e., $L^b(k) \ne L^b(m) \ne L^f(m) \ne L^f(k)$ for $k \ne m$.

Consider the singular-value decomposition $B_{0:n} \equiv U_{n} S_{n}
(V_n)^T$ so that under the canonical inner product
\begin{align*} E^f_n(0) = \left[(B_{0:n})^T B_{0:n}
\right]^\frac{1}{2n}= [V_n (S_{n})^2 (V_n)^T]^\frac{1}{2n} = V_n
(S_{n})^\frac{1}{n} (V_n)^T \,,
\end{align*}
implying $V_{E^f_n(0)}= V_n$ and
\begin{equation}
\label{eq:forwardLyapunov} \lim\limits_{n\rightarrow \infty} \parallel
\v_{j,n} - \l^f_j(0) \parallel = 0
\end{equation}
where $\v_{j,n}$ (and similarly $\u_{j,n}$ below) is the
$j^{\text{th}}$ column vector of $V_n$ (respectively $U_n$).  Likewise, we obtain
\begin{align*} E^b_{n}(n) = \left[B_{0:n}
(B_{0:n})^T\right]^\frac{1}{2n} = \left[U_n (S_n)^2 (U_n)^T
\right]^\frac{1}{2n}=U_n (S_n)^\frac{1}{n} (U_n)^T \,
\end{align*}
from which we can deduce that $V_{E^b_n(n)}= U_n$ and
\begin{equation}
\label{eq:backwardLyapunov} \lim\limits_{n\rightarrow
\infty} \parallel \u_{j,n} - \l^b_j(n) \parallel=0.
\end{equation}
We also infer that
\begin{equation}
\label{eq:eigsingval} \left(\sigma_j(B_{0:n})\right)^\frac{1}{n} =
\lambda_j(E^b_n(n))= \lambda_j(E^f_n(0)).
\end{equation}

\subsection{Controllability and observability for linear dynamics}
\label{sec:control}
The notions of observability and controllability are dual notions
within filtering problems. Roughly observability is the condition
that given sufficiently many observations, the initial state of the
system can be reconstructed by using a finite number of observations.
Similarly, controllability can be described as the ability to move the
system from any initial state to a desired state over a finite time
interval.  Formally stated:
\begin{mydef}\label{def:observeable}The system~(\ref{eq:nonautgen}) is defined to be
\emph{completely observable} if $\forall n \geq 1$,
\begin{equation}
\label{eq:observe} 
\det\left(\sum_{m=0}^{d-1} \left(B_{n:n+m}\right)^T H_{n+m}^T Q_{n+m}^{-1}H_{n+m} B_{n:n+m}\right) \neq 0
\end{equation}
and it is defined to be \emph{completely controllable} if $\forall n \geq 0$,
\begin{equation}
\label{eq:control} 
\det\left(\sum_{m=1}^{d} B_{n+m:n+d} F_{n+m} F_{n+m}^T \left(B_{n+m:n+d}\right)^T\right) \neq 0.
\end{equation}
In addition we describe the system as \emph{uniformly completely observable}  (respectively \emph{uniformly completely controllable}) if equation (\ref{eq:observe}) (respectively (\ref{eq:control})) is bounded from zero uniformly in $n$. 
\end{mydef}

We will assume that the system in equations~(\ref{eq:nonaut}) is uniformly completely
observable, i.e., the inequality~(\ref{eq:observe}) is uniformly bounded away from zero. Note however that this system \emph{cannot} be controllable since the
determinant in the equation~(\ref{eq:control}) is identically zero for a deterministic, perfect-model system as $F_n = 0, \forall n$.The hypothesis of uniform complete observability assures that the error covariance matrices remain bounded over time as seen below.
\begin{lemma}
\label{lem:boundedness}
Suppose that the linear, non-autonomous system~(\ref{eq:nonaut}) where the initial state $\x_0$ has a Gaussian law with mean $\x_{0\mid0}$ and covariance $\Delta_0$ is uniformly completely observable (Definition \ref{def:observeable}). Then the error covariance matrices remain bounded for all time, i.e, there exist constants $c_{\Sigma}$ and $c_{\Delta}$ such that $\forall n$, $\parallel \Delta_n \parallel \leq c_{\Delta}$ and $\parallel \Sigma_n \parallel \leq c_{\Sigma}$.
\end{lemma}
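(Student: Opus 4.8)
The plan is to pass to the \emph{information form} of the filter and exhibit a uniform positive-definite lower bound on the inverse analysis covariance; bounding $\|\Delta_n\|$ above is equivalent to bounding $\Delta_n^{-1}$ below. Since $\Delta_0>0$ and every $A_n$ is non-singular (and $Q_n>0$), a short induction using \eqref{eq:Sigmaupdate} and \eqref{eq:Deltaupdate} shows $\Sigma_n,\Delta_n>0$ for all $n$, so every inverse below exists. Applying the Sherman--Morrison--Woodbury identity to \eqref{eq:KalmanGainupdate}--\eqref{eq:Deltaupdate} and combining with \eqref{eq:Sigmaupdate} gives the standard recursion
\begin{equation*}
\Delta_n^{-1} = A_n^{-T}\Delta_{n-1}^{-1}A_n^{-1} + H_n^T Q_n^{-1} H_n,
\end{equation*}
which, using $B_{k:n}^{-1}=A_{k+1}^{-1}\cdots A_n^{-1}$, unrolls to
\begin{equation*}
\Delta_n^{-1} = (B_{0:n}^{-1})^T \Delta_0^{-1} B_{0:n}^{-1} + \sum_{k=1}^n (B_{k:n}^{-1})^T H_k^T Q_k^{-1} H_k\, B_{k:n}^{-1}.
\end{equation*}

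Every summand is positive-semidefinite and $\Delta_0^{-1}\ge 0$, so for $n\ge d$ I would discard the initial term and all but the most recent $d$ observations, keeping
\begin{equation*}
\Delta_n^{-1} \ge \tilde{\mathcal{O}}_n \equiv \sum_{k=n-d+1}^n (B_{k:n}^{-1})^T H_k^T Q_k^{-1} H_k\, B_{k:n}^{-1}.
\end{equation*}
This $\tilde{\mathcal{O}}_n$ is a backward-in-time observability Gramian on the window $[n-d+1,n]$, whereas the hypothesis \eqref{eq:observe} is stated through the forward Gramian $\mathcal{O}_{n-d+1}=\sum_{m=0}^{d-1}(B_{n-d+1:n-d+1+m})^T H_{n-d+1+m}^T Q_{n-d+1+m}^{-1} H_{n-d+1+m} B_{n-d+1:n-d+1+m}$. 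The bridge between the two is the propagator identity $B_{n-d+1:k}=B_{k:n}^{-1}B_{n-d+1:n}$, which after summation yields
\begin{equation*}
\mathcal{O}_{n-d+1} = (B_{n-d+1:n})^T\, \tilde{\mathcal{O}}_n\, B_{n-d+1:n}, \qquad\text{equivalently}\qquad \tilde{\mathcal{O}}_n = (B_{n-d+1:n})^{-T}\, \mathcal{O}_{n-d+1}\, (B_{n-d+1:n})^{-1}.
\end{equation*}

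I then invoke uniform complete observability in its spectral form $\mathcal{O}_m\ge\alpha I$ for some $\alpha>0$ and all $m$, and estimate, for any $\v$, $\v^T\tilde{\mathcal{O}}_n\v=\w^T\mathcal{O}_{n-d+1}\w\ge\alpha\|\w\|^2$ with $\w=B_{n-d+1:n}^{-1}\v$. Since $B_{n-d+1:n}=A_n\cdots A_{n-d+2}$ is a product of $d-1$ factors, $\|B_{n-d+1:n}\|\le c_A^{\,d-1}$, hence $\|\w\|\ge c_A^{-(d-1)}\|\v\|$ and therefore $\Delta_n^{-1}\ge\beta I$ with $\beta=\alpha\,c_A^{-2(d-1)}$, uniformly for $n\ge d$. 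This gives $\|\Delta_n\|\le 1/\beta$; the finitely many cases $n<d$ are each a fixed positive-definite matrix and so are bounded individually, and one takes $c_\Delta=\max\bigl(1/\beta,\ \max_{1\le n<d}\|\Delta_n\|\bigr)$. Finally \eqref{eq:Sigmaupdate} gives $\|\Sigma_n\|\le\|A_n\|^2\|\Delta_{n-1}\|\le c_A^2\,c_\Delta=:c_\Sigma$, completing the argument.

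The main obstacle is the third displayed step: the hypothesis supplies observability \emph{forward} in time from each epoch, whereas the information accumulated at time $n$ naturally weights \emph{past} observations pulled back through $B_{k:n}^{-1}$, so one must both isolate the correct $d$-step window and then control the congruence by $B_{n-d+1:n}^{-1}$. It is precisely here that the uniform operator-norm bound $\|A_n\|\le c_A$ is indispensable --- without it the congruence could erode the observability lower bound as $n\to\infty$, and no uniform constant would survive. A secondary point to be explicit about is the reading of uniform complete observability as the spectral lower bound $\mathcal{O}_m\ge\alpha I$ (the uniform nondegeneracy of the determinant in \eqref{eq:observe} being its scalar counterpart); this is exactly the quantity the final estimate consumes.
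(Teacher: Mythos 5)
Your argument is essentially correct and it takes a genuinely different route from the paper's: the paper's entire proof of Lemma~\ref{lem:boundedness} is a citation of the autonomous-case bound in Kumar~\cite{Kumar86} (Chapter~7, eqs.~(2.36)--(2.37)) together with the remark that the non-autonomous extension is straightforward, whereas you supply that extension explicitly, in information form. Your recursion $\Delta_n^{-1}=A_n^{-T}\Delta_{n-1}^{-1}A_n^{-1}+H_n^TQ_n^{-1}H_n$ is the correct perfect-model specialization of the Riccati update (it follows from \eqref{eq:Sigmaupdate}, \eqref{eq:KalmanGainupdate} and \eqref{eq:Deltaupdate} via Sherman--Morrison--Woodbury, using $\Delta_0>0$ and the invertibility of $A_n$); the unrolled sum and the bridging identity $B_{n-d+1:k}=B_{k:n}^{-1}B_{n-d+1:n}$ check out by direct computation; and the congruence estimate correctly uses $\| B_{n-d+1:n}\| \le c_A^{d-1}$ (a product of $d-1$ factors) to get $\Delta_n^{-1}\ge \alpha\, c_A^{-2(d-1)}I$ for $n\ge d$, with the finitely many cases $n<d$ and the bound $c_\Sigma=c_A^2 c_\Delta$ handled correctly. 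What your route buys is a self-contained, quantitative proof with explicit constants; what the paper's route buys is brevity, at the cost of leaving the non-autonomous, perfect-model adaptation unverified.

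The one substantive caveat is the step you flag yourself: you read uniform complete observability as the spectral bound $\mathcal{O}_m\ge\alpha I$, while Definition~\ref{def:observeable} literally asks only that the determinant in \eqref{eq:observe} be uniformly bounded away from zero. These are \emph{not} equivalent under the paper's standing assumptions: a determinant bound controls only the product of eigenvalues, giving $\lambda_{\min}(\mathcal{O}_m)\ge \det(\mathcal{O}_m)/\lambda_{\max}(\mathcal{O}_m)^{d-1}$, and nothing in the paper bounds $\lambda_{\max}(\mathcal{O}_m)$ from above, since $\| Q_n\| \le c_Q$ gives no control of $\| Q_n^{-1}\|$. In fact the lemma is false under the literal determinant reading: with $d=2$, $A_n=aI$ for $a>1$, $H_n=\mathrm{diag}(1,a^{-n/2})$, $Q_n=\mathrm{diag}(a^{-n},1)$ and $\Delta_0=I$, every stated assumption holds and $\det\mathcal{O}_n=(1+a^3)(1+a)$ for all $n$, yet the $(2,2)$ entry of $\Delta_n^{-1}$ equals $a^{-2n}+\sum_{k=1}^n a^{-2(n-k)}a^{-k}=O(a^{-n})$, so $\| \Delta_n\|$ grows like $a^n$. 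Hence your spectral reading is not merely a convenience but a necessity---it is the standard definition, and the one under which the result cited by the paper operates---and the defect this exposes lies in the paper's determinant-based formulation of uniform complete observability rather than in your argument. You should, however, state explicitly that you are strengthening the hypothesis to $\mathcal{O}_m\ge\alpha I$, since no argument can recover that bound from \eqref{eq:observe} alone.
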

\begin{proof}
The result is proven for autonomous systems in Kumar~\cite{Kumar86}, Chapter~7, equations~($2.36$) and ($2.37$). Extension to the non-autonomous case is straightforward by rehashing the steps and changing the constants of the autonomous system to their time-varying counterparts.
\end{proof}

One should note the recent work of Ni et al. \cite{Ni2016} has demonstrated a stronger result: in continuous, perfect model systems the assumption of uniform complete observability is sufficient to demonstrate the stability of the Kalman filter.  In particular this shows that all solutions to the continuous Riccati equation for any choice of initial error covariance are bounded and converge to the same solution asymptotically.  This strongly suggest the same can be shown for the discrete time system, and we will return to this point in our discussion of results in Section \ref{sec:discussion}. 

Utilizing only the boundedness of the error covariance matrices, we demonstrate that the matrix $M_n$ stays bounded in the following lemma.
\begin{lemma}
\label{lem:Mnbound} 
Consider the uniformly completely observable, perfect-model, linear, non-autonomous system~(\ref{eq:nonaut}) where the initial state $\x_0$ has a Gaussian law with covariance $\Delta_0 >0$. Then the matrix $M_n$ defined in equation~(\ref{eq:Mn}) is uniformly bounded, i.e., there exist a constant $c_M$ such that $\parallel M_n \parallel \leq c_M, \forall n$.
\end{lemma}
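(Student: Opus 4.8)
The plan is to avoid solving for $M_n$ directly from the identity $\Delta_n = M_n \Delta_0 B_{0:n}^T$ of equation~(\ref{eq:MnUpdate}). Although $\Delta_0$ is invertible, the factor $B_{0:n}^T$ is badly conditioned---its inverse blows up along the stable directions---so no uniform bound on $M_n$ can be extracted by inverting it. Instead I would establish a \emph{symmetric} lower bound on $\Delta_n$ that isolates $M_n$, namely the Loewner inequality
\begin{equation*}
\Delta_n \geq M_n \Delta_0 M_n^T, \qquad \forall n \geq 1,
\end{equation*}
and then exploit $\Delta_0 > 0$ together with the boundedness $\parallel \Delta_n \parallel \leq c_{\Delta}$ supplied by Lemma~\ref{lem:boundedness}.

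The key algebraic input is the Joseph-form rewriting of the analysis covariance. Substituting the optimal gain $K_n = \Sigma_n H_n^T[H_n \Sigma_n H_n^T + Q_n]^{-1}$ into $\Delta_n = (I - K_n H_n)\Sigma_n$ and regrouping (a routine calculation using the identity $K_n(H_n\Sigma_n H_n^T + Q_n) = \Sigma_n H_n^T$) yields
\begin{equation*}
\Delta_n = (I - K_n H_n)\Sigma_n (I - K_n H_n)^T + K_n Q_n K_n^T.
\end{equation*}
Since $Q_n \geq 0$, the second term is positive semidefinite, so $\Delta_n \geq (I - K_n H_n)\Sigma_n (I - K_n H_n)^T$; inserting $\Sigma_n = A_n \Delta_{n-1} A_n^T$ gives $\Delta_n \geq (I - K_n H_n) A_n \Delta_{n-1} A_n^T (I - K_n H_n)^T$.

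From here I would argue by induction. The base case $\Delta_1 \geq M_1 \Delta_0 M_1^T$ is immediate from the displayed inequality with $\Sigma_1 = A_1 \Delta_0 A_1^T$ and $M_1 = (I - K_1 H_1) A_1$. For the inductive step I use congruence-monotonicity of the Loewner order: if $X \geq Y$ then $C X C^T \geq C Y C^T$ for any matrix $C$. Taking $C = (I - K_n H_n) A_n$ and the hypothesis $\Delta_{n-1} \geq M_{n-1}\Delta_0 M_{n-1}^T$ converts the inequality above into $\Delta_n \geq (I - K_n H_n) A_n M_{n-1} \Delta_0 M_{n-1}^T A_n^T (I - K_n H_n)^T = M_n \Delta_0 M_n^T$, using the recursion $M_n = (I-K_nH_n)A_n M_{n-1}$ of equation~(\ref{eq:Mn}). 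This closes the induction.

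Finally, because $\Delta_0 > 0$ we have $\Delta_0 \geq \epsilon I$ with $\epsilon \equiv \lambda_d(\Delta_0) > 0$ (the smallest eigenvalue), whence $M_n \Delta_0 M_n^T \geq \epsilon M_n M_n^T$ by the same congruence step. Combining the inequalities and taking operator norms on positive semidefinite matrices (where the norm equals the largest eigenvalue and is order-preserving) gives $c_{\Delta} \geq \parallel \Delta_n \parallel \geq \epsilon \parallel M_n M_n^T \parallel = \epsilon \parallel M_n \parallel^2$, so $\parallel M_n \parallel \leq \sqrt{c_{\Delta}/\epsilon}$ uniformly in $n$, which is the claim with $c_M = \sqrt{c_{\Delta}/\epsilon}$. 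The one step requiring genuine insight rather than bookkeeping is recognizing that one should \emph{discard} the $K_n Q_n K_n^T$ term to obtain a clean $M_n \Delta_0 M_n^T$ lower bound and then pivot on $\Delta_0 > 0$; the direct route through $B_{0:n}$ fails precisely because of the ill-conditioning noted at the outset.
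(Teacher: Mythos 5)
Your proposal is correct and follows essentially the same route as the paper's own proof: both derive the Joseph-form identity $\Delta_n = (I-K_nH_n)A_n\Delta_{n-1}A_n^T(I-K_nH_n)^T + K_nQ_nK_n^T$, discard the positive-semidefinite term $K_nQ_nK_n^T$ to obtain $\Delta_n \geq M_n\Delta_0 M_n^T$ by recursion, and then combine $\Delta_0 > 0$ with the bound $\parallel\Delta_n\parallel \leq c_\Delta$ from Lemma~\ref{lem:boundedness}. The only differences are cosmetic: you verify the Joseph form via the defining relation $K_n(H_n\Sigma_nH_n^T+Q_n)=\Sigma_nH_n^T$ while the paper invokes $K_n=\Delta_nH_n^TQ_n^{-1}$, and your final step uses $\Delta_0 \geq \lambda_d(\Delta_0)I$ with Loewner monotonicity of the norm where the paper uses the eigendecomposition of $\Delta_0$ and submultiplicativity, yielding the same constant $c_M=\sqrt{c_\Delta/\lambda_d(\Delta_0)}$.
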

\begin{proof}
We first show that the analysis error covariance matrix satisfies the recursive equation
\begin{equation}
\label{eq:analysisrecursive}
\Delta_n = (I-K_n H_n) A_n \Delta_{n-1} A_n^T (I-K_n H_n)^T + K_n Q_n K_n^T.
\end{equation}
Plugging in the Kalman update equations~(\ref{eq:Sigmaupdate}) and (\ref{eq:Sigmaupdate}), the R.H.S of equation~(\ref{eq:analysisrecursive}) equals $\Delta_n - (\Delta_n H_n^T - K_n Q_n) K_n^T$. The equation $(4.29)$ in \cite{Cohn97} establishes the equality $K_n = \Delta_n H_n^T Q_n^{-1}$ from which the recursion~(\ref{eq:analysisrecursive}) follows; further implying that
\begin{equation*}
\Delta_{n} \geq (I-K_n H_n) A_n \Delta_{n-1} A_n ^T (I-K_n H_n)^T.
 \end{equation*}
Recursively applying the above inequality gives $\Delta_{n} \geq M_n \Delta_0 M_n^T$. Decomposing $\Delta_0 = V_{\Delta_0}\Lambda_{\Delta_0}  V_{\Delta_0}^T$ and employing Lemma~\ref{lem:boundedness} we find
\begin{equation*}
\left \parallel M_n V_{\Delta_0} \Lambda_{\Delta_0}^{\frac{1}{2}}\right \parallel ^2 \leq \parallel \Delta_{n} \parallel \leq c_{\Delta}.
\end{equation*}
As $\parallel M_n \parallel \leq \left \parallel M_n V_{\Delta_0} \Lambda_{\Delta_0}^{\frac{1}{2}}\right \parallel \left \parallel \Lambda_{\Delta_0}^{-\frac{1}{2}} V_{\Delta_0}^T \right \parallel$ the result follows. Note that as $\Delta_0 >0$ the matrix $\Lambda_{\Delta_0}^{-\frac{1}{2}}$ is well-defined.
\end{proof}

\noindent
Bearing this bound in mind we shall proceed to discuss the asymptotic properties of the error covariance matrices.

\subsection{The asymptotic rank deficiency of the error covariance}
\label{sec:errcovrankdeficiency}
We begin by introducing a lemma which allows us to formally describe the collapse of the eigenvalues of the error covariance matrix. 
\begin{lemma}
\label{lemma:epsilonnull} For a given $\epsilon > 0$, let $Z\in\mathbb{R}^{d\times d}$ be a
symmetric matrix such that there is a $k \leq d$ dimensional
subspace $\W \subset \mathbb{R}^d$ for which
\begin{equation*} \sup\{\parallel Z \u \parallel
: \parallel\u\parallel =1, \u \in \mathcal{W} \} < \epsilon.
\end{equation*} Then $\dim\left( \E^{\epsilon}(Z) \right) \geq k$ where the subspace $\E^{\epsilon}$ is in accordance with Definition~\ref{def:alphaeigenspace}.
\end{lemma}
\begin{proof} Let $\{\v_1,\cdots,\v_d\}$ be an orthonormal eigenvector
basis for $Z$ corresponding to $\mid\lambda_1(Z)\mid
\geq \cdots \geq \mid \lambda_d(Z)\mid,$
and let $\{\u_1,\cdots,\u_k\}$ be a basis for
$\W$ of unit magnitude, such that we write
\begin{equation*} \u_l = \sum\limits_{j=1}^d \beta_{l,j} \v_j;
\hspace{10pt} l \in \{1,2,\ldots,k\}
\end{equation*} and the matrix of coefficients
\begin{equation*}
\begin{pmatrix} \beta_{1,1} & \beta_{1,2} & \cdots &
\beta_{1,d-k+1}&0&\cdots&0 \\ \beta_{2,1} & \beta_{2,2} & \cdots &
\beta_{2,d-k+1}& \beta_{2,d-k+2}&\cdots&0 \\ \vdots & \vdots & \vdots
& \vdots&\ddots&\vdots&\vdots \\ \beta_{k-1,1} & \beta_{k-1,2} &
\cdots & \cdots& \cdots&\beta_{k-1,d-1}&0 \\ \beta_{k,1} & \beta_{k,2}
& \cdots & \cdots& \cdots&\beta_{k,d-1}&\beta_{k,d}
\end{pmatrix}
\end{equation*} is in column echelon form where for every column index
$j > d-k+1$, the entries
\begin{equation*} \beta_{1,j}=\cdots=\beta_{k+j-d-1,j}=0
\end{equation*} and for every row index $l \leq k$,
$\sum\limits_{j=1}^{d-k+l} \beta_{l,j}^2 = 1$ corresponding $\parallel
u_{l}\parallel = 1$. Furthermore, as $Z$ is symmetric its
eigen-vectors form an orthonormal basis and hence $\parallel Z
\u_{l} \parallel^2 =\sum\limits_{j=1}^{d-k+l} \beta_{l,j}^2
\lambda_{j}^2(Z)$. For every $1\leq l \leq k$, setting $s=k-l+1$ we
find
\begin{equation*} \epsilon^2 > \parallel Z \u_{s} \parallel^2 =
\sum\limits_{j=1}^{d-k+s} \beta_{s,j}^2 \lambda_{j}^2(Z) \geq
\lambda_{d-k+s}^2(Z) = \lambda_{d-l+1}^2(Z).
\end{equation*} Hence the $k$ smallest eigen-values in absolute
magnitude satisfy
\begin{equation*} \mid \lambda_{d}(Z)\mid \leq \cdots \leq \mid
\lambda_{d-k+1}(Z) \mid < \epsilon
\end{equation*} and the result follows.
\end{proof}

\begin{theorem}
\label{thm:eigenvalueszero} 
Consider the uniformly completely observable, perfect-model, linear, non-autonomous system~(\ref{eq:nonaut}) where the initial state $\x_0$ has a Gaussian law with covariance $\Delta_0$. Then $\forall \epsilon>0$, $\exists n_1 > 0$
such that if $n \geq n_1$, $\Sigma_n$ and $\Delta_n$ will each have
at least $d-d_0$ eigen-values which are less than $\epsilon$ where
$d-d_0$ is the number of \textbf{negative} Lyapunov exponents of the
system~(\ref{eq:nonaut}), i.e.,
\begin{equation}\begin{matrix} \dim\left( \E^{\epsilon}(\Sigma_n)
\right) \geq d-d_0,& \text{and}& \dim\left( \E^{\epsilon}(\Delta_n)
\right) \geq d-d_0
\end{matrix} \label{dim-unstable}\end{equation}
where the subspace $\E^{\epsilon}$ is in accordance with Definition~\ref{def:alphaeigenspace}.
\end{theorem}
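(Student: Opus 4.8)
The plan is to exploit the factorization $\Delta_n = M_n \Delta_0 B_{0:n}^T$ from equation~(\ref{eq:MnUpdate}) together with the uniform bound $\parallel M_n\parallel \leq c_M$ from Lemma~\ref{lem:Mnbound}, and then read off the eigenvalue collapse through Lemma~\ref{lemma:epsilonnull}. The guiding observation is that $\Delta_n$ acts on the left singular vectors of $B_{0:n}$ by carrying the corresponding singular value out in front; since the $d-d_0$ smallest singular values of $B_{0:n}$ decay to zero, $\Delta_n$ maps the span of the associated singular vectors into an arbitrarily small ball. Concretely I would exhibit, for each large $n$, a $(d-d_0)$-dimensional subspace $\W_n$ on which $\parallel \Delta_n \u\parallel < \epsilon$ for every unit $\u$, and then invoke Lemma~\ref{lemma:epsilonnull} with $Z = \Delta_n$ (which is symmetric, being a conditional covariance) and $k = d-d_0$.

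First I would fix the singular value decomposition $B_{0:n} = U_n S_n V_n^T$, so that $B_{0:n}^T = V_n S_n U_n^T$, and compute the action of $\Delta_n$ on the columns $\u_{j,n}$ of $U_n$. Since $U_n^T\u_{j,n}$ is the $j$th coordinate vector, $S_n$ scales it by $\sigma_j(B_{0:n})$, and $V_n$ then returns $\sigma_j(B_{0:n})\v_{j,n}$, one obtains the clean identity
\begin{equation*}
\Delta_n \u_{j,n} = M_n \Delta_0 V_n S_n U_n^T \u_{j,n} = \sigma_j(B_{0:n})\, M_n \Delta_0 \v_{j,n},
\end{equation*}
and hence $\parallel \Delta_n \u_{j,n}\parallel \leq c_M \parallel\Delta_0\parallel \sigma_j(B_{0:n})$, using $\parallel M_n\parallel \leq c_M$ and $\parallel\v_{j,n}\parallel = 1$. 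Taking $\W_n = \mathrm{span}\{\u_{d_0+1,n},\dots,\u_{d,n}\}$, any unit $\u \in \W_n$ expands as $\u = \sum_{j>d_0} c_j \u_{j,n}$ with $\sum_{j>d_0} c_j^2 = 1$ by orthonormality of the columns of $U_n$; a triangle inequality followed by Cauchy--Schwarz then yields
\begin{equation*}
\parallel \Delta_n \u\parallel \leq c_M \parallel\Delta_0\parallel \sigma_{d_0+1}(B_{0:n}) \sqrt{d-d_0},
\end{equation*}
where every $\sigma_j(B_{0:n})$ with $j>d_0$ has been bounded by the largest of them, $\sigma_{d_0+1}(B_{0:n})$.

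It remains to show the right-hand side can be driven below $\epsilon$. By equation~(\ref{eq:eigsingval}) and Oseledet's theorem, $\left(\sigma_{d_0+1}(B_{0:n})\right)^{1/n} \to \lambda_{d_0+1}(E) = e^{\mu_{d_0+1}}$, and by hypothesis $\mu_{d_0+1} < 0$, so this limit is strictly less than one. Choosing any $\rho$ with $e^{\mu_{d_0+1}} < \rho < 1$ gives $\sigma_{d_0+1}(B_{0:n}) < \rho^n$ for all sufficiently large $n$, whence $\sigma_{d_0+1}(B_{0:n}) \to 0$. Therefore there is an $n_1$ so that for $n \geq n_1$ the displayed bound is below $\epsilon$ for every unit $\u \in \W_n$, and Lemma~\ref{lemma:epsilonnull} then gives $\dim\left(\E^{\epsilon}(\Delta_n)\right) \geq d-d_0$.

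For the forecast covariance I would use $\Sigma_n = A_n \Delta_{n-1} A_n^T = (A_n M_{n-1})\Delta_0 B_{0:n}^T$, noting $A_n B_{0:n-1} = B_{0:n}$, and observe that $\parallel A_n M_{n-1}\parallel \leq c_A c_M$ is again uniformly bounded. The identical computation, now with $c_A c_M$ in place of $c_M$, produces the bound $\parallel \Sigma_n \u\parallel \leq c_A c_M \parallel\Delta_0\parallel \sigma_{d_0+1}(B_{0:n}) \sqrt{d-d_0}$ on the same subspace $\W_n$, and the same limiting argument plus Lemma~\ref{lemma:epsilonnull} yields $\dim\left(\E^{\epsilon}(\Sigma_n)\right) \geq d-d_0$. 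The main obstacle in this argument has already been discharged upstream: it is the uniform boundedness of $M_n$ (Lemma~\ref{lem:Mnbound}), which prevents the growth of $M_n$ from compensating the decay of the small singular values. Given that bound, the remaining work is the bookkeeping of the SVD action and the exponential decay of $\sigma_{d_0+1}(B_{0:n})$ furnished by Oseledet's theorem.
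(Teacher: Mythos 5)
Your proposal is correct, but it takes a genuinely different route from the paper's own proof of this theorem. The paper works with the \emph{forward} stable Lyapunov vectors at time zero: writing $\Delta_n = B_{0:n}\Delta_0 M_n^T$, it observes $\Delta_n M_n^{-T}\Delta_0^{-1}\l^f_j(0) = B_{0:n}\l^f_j(0)$ and exhibits the collapsing subspace as the span of the vectors $\w_{j,n} = M_n^{-T}\Delta_0^{-1}\l^f_j(0)$, $j>d_0$, whose images under $\Delta_n$ decay like $e^{(\mu_j+\rho)n}$; this requires invertibility of $M_n$ and $\Delta_0$ in addition to the bound of Lemma~\ref{lem:Mnbound}. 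You instead exhibit the collapsing subspace $\W_n$ as the span of the trailing left singular vectors of $B_{0:n}$, via the identity $\Delta_n\u_{j,n} = \sigma_j(B_{0:n})\,M_n\Delta_0\v_{j,n}$ and the decay $\left(\sigma_{d_0+1}(B_{0:n})\right)^{1/n}\rightarrow e^{\mu_{d_0+1}}<1$; only the upper bound $\parallel M_n \parallel \leq c_M$ is needed, with no inverses anywhere. Your route is essentially the machinery the paper itself deploys later in the proof of Theorem~\ref{thm:epsilonnull} (equations~(\ref{eq:projuj}) and~(\ref{eq:projectdecay})), so in effect you obtain the rank deficiency as a byproduct of the stronger restriction estimate, with your $\W_n$ converging to the backward stable Lyapunov span. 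Each approach buys something. Your orthonormal basis makes the supremum over \emph{all} unit vectors of $\W_n$---which is what Lemma~\ref{lemma:epsilonnull} actually requires---immediate from the triangle inequality and Cauchy--Schwarz; the paper's basis $\{\w_{j,n}\}$ is not orthonormal, and its proof as written only bounds $\parallel \Delta_n \overline{\w}_{j,n}\parallel$ for the individual normalized basis vectors, leaving implicit the uniform lower bound on $\parallel \sum_j c_j \w_{j,n}\parallel$ (which does hold, since $\parallel M_n^{-T}\Delta_0^{-1}\z\parallel \geq \parallel \z \parallel / (c_M \parallel \Delta_0\parallel)$) needed to control arbitrary unit vectors in the span. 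Conversely, the paper's choice of subspace is precisely what feeds the corollary following Theorem~\ref{thm:epsilonnull}---that $M_n^{-T}\Delta_0^{-1}$ asymptotically maps the forward stable vectors into the backward stable span---a statement your argument does not produce.
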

\begin{proof}
As denoted earlier, let $\mu_1 \geq \mu_2 \geq \cdots \geq \mu_d$ be the Lyapunov exponents of the system~(\ref{eq:nonaut}) where $d_0 < d$ of them are non-negative. The \emph{forward stable} Lyapunov vectors based at time zero is the set $\{\l^f_j(0)\}^{d}_{j=d_0+1}$ which by definitions~(\ref{eq:EbnConvEb}) and (\ref{def:LyapunovExp}) satisfy 
\begin{align}\label{eq:decay}
\lim_{n\rightarrow \infty} \frac{1}{n} \log\left(\left \parallel B_{0:n} \l^f_j(0) \right \parallel \right) = \mu_j.
\end{align}
Rewriting the analysis error covariance update equation in terms of the transpose
\begin{equation*}
\Delta_n =  M_n \Delta_0 B_{0:n}^T = B_{0:n}\Delta_0 M_n^T
\end{equation*}
we get $\Delta_n M_n^{-T} \Delta_0^{-1}  =  B_{0:n}$ and in particular 
\begin{equation*}
\Delta_n M_n^{-T} \Delta_0^{-1} \l^f_j(0) =  B_{0:n}\l^f_j(0).
\end{equation*}
Let us therefore define the sequence of vectors
\begin{align}
\w_{j,n} \equiv M_n^{-T} \Delta_0^{-1} \l^f_j(0).
\end{align}  
By Lemma \ref{lem:Mnbound} we know that $M_n$ is bounded above, so that the sequence of vectors $\w_{j,n} = M_n^{-T} \Delta_0^{-1} \l^f_j(0)$ must be bounded below.  As such, there is a constant $c_{\w}$ such that $c_{\w} \leq\parallel \w_{j,n} \parallel, \forall n $ and $j \in \{d_0+1,\ldots,d\}$. Choose a $\rho>0$ such that for each $j \in \{d_0+1 ,\ldots, d\}$, $\rho + \mu_j <0$.  Define $\overline{\w}_{j,n} \equiv \frac{\w_{j,n}}{\parallel \w_{j,n} \parallel} $. Then for a given $\epsilon > 0$, $\exists n_1$ such that for $n \geq n_1$
\begin{align}
\label{eq:forwardstablespace}
 \parallel \Delta_n  \overline{\w}_{j,n} \parallel = \frac{1}{\parallel \w_{j,n} \parallel}\parallel B_{0:n}\l^f_j(0)\parallel \leq \frac{1}{c_{\w}} e^{(\mu_j + \rho)n} < \epsilon.
\end{align} 
The theorem is therefore an immediate consequence of Lemma \ref{lemma:epsilonnull}. The proof for $\Sigma_n$ follows along similar lines.
\end{proof}

\subsection{Null space characterization and assimilation in the unstable subspace}
\label{sec:errcovnullspace}
The sequence of subspaces defined by the span of $\{\w_{j,n}\}_{j=d_0+1}^d$ will be the object of study for the remainder of this section.  In particular, we wish to establish the connection between this sequence of subspaces and assimilation in the unstable subspace which utilizes the \emph{backwards Lyapunov vectors}.  
\begin{mydef}
  \label{def:projections}
Define $\Lambda^s_{E^f_n(0)}$ to be the $d-d_0\times d-d_0$ diagonal matrix with diagonal entries given by $\left\{\lambda_j\left(E^f_n(0)\right)\right\}_{j=d_0 +1}^{d}$.
Also, let us define the following $d\times d- d_0$ operators 
\begin{align}
U^s_n = \left[\u_{d_0+1,n}, \cdots,
\u_{d,n}\right]\\
V^s_n = \left[\v_{d_0+1,n}, \cdots,
\v_{d,n}\right]\\
L^{bs}_n = \left[\l^b_{d_0+1}(n),\cdots,\l^b_{d}(n)\right]
\end{align}
Note that equation~(\ref{eq:backwardLyapunov}) implies that 
\begin{align}
\lim_{n\rightarrow\infty} \parallel U^s_{n} - L^{bs}_n\parallel=0.
\end{align}
\end{mydef}

Consider the equation~(\ref{eq:MnUpdate}), namely $\Delta_n =M_n
\Delta_0 V_n S_n U_n^T$, for the analysis error covariance $\Delta_n$
at time $n$ in terms of the matrix $M_n$ and the singular-value
decomposition of the propagator $B_{0:n}$.  Noting that
$B_{0:n}^T \u_{j,n} = \sigma_j(B_{0:n}) \v_{j,n}$ and
utilizing the relation~(\ref{eq:eigsingval}) we get
\begin{align}
\label{eq:projuj}
\Delta_nU^s_n \left(U^s_n\right)^T &= M_n \Delta_0
V^s_n\left(\Lambda^s_{E^f_n(0)}\right)^n\left(U^s_n\right)^T.
\end{align} 
Likewise, recalling that $\Sigma_{n} =
A_{n} \Delta_{n-1} A_{n}^T$, we can express the restriction of the
forecast error covariances as
\begin{align}
\label{eq:projujsigma}
\Sigma_nU^s_n
\left(U^s_n\right)^T &= A_nM_{n-1} \Delta_0
V^s_n\left(\Lambda^s_{E^f_n(0)}\right)^n\left(U^s_n\right)^T.
\end{align}

Making use of the above relations we now
prove one of our main result, which states that the norm of the
restriction of the analysis and forecast error covariances onto the
backwards stable Lyapunov subspaces must tend to zero.

\begin{theorem}
\label{thm:epsilonnull} 
Consider the uniformly completely observable, perfect-model, linear, non-autonomous system~(\ref{eq:nonaut}) where the initial state $\x_0$ has a Gaussian law with covariance $\Delta_0$. The restriction of $\Delta_n$ and $\Sigma_n$ into the span of the backwards stable Lyapunov vectors, $\{\l^b_j(n)\}_{j=d_0+1}^d$, tends to zero as $n\rightarrow \infty$.  That is
\begin{align}
\lim_{n\rightarrow \infty}\parallel \Delta_nL^{bs}_n\left(L^{bs}_n\right)^T\parallel = 0,\\
\lim_{n\rightarrow \infty}\parallel \Sigma_nL^{bs}_n\left(L^{bs}_n\right)^T\parallel = 0.
\end{align}   
\end{theorem}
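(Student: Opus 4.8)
The plan is to start from the two identities already derived in \eqref{eq:projuj} and \eqref{eq:projujsigma}, namely
\[
\Delta_n U^s_n \left(U^s_n\right)^T = M_n \Delta_0 V^s_n \left(\Lambda^s_{E^f_n(0)}\right)^n \left(U^s_n\right)^T,
\]
together with the analogous expression for $\Sigma_n$ involving the prefactor $A_n M_{n-1}$, and to show that the right-hand sides tend to zero. Every factor appearing in these products is uniformly bounded except for $\left(\Lambda^s_{E^f_n(0)}\right)^n$, so the whole argument hinges on showing that this diagonal factor decays to zero exponentially fast.

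To control that factor I would use the exact relation \eqref{eq:eigsingval}: the diagonal entries of $\left(\Lambda^s_{E^f_n(0)}\right)^n$ are precisely $\left(\lambda_j(E^f_n(0))\right)^n = \sigma_j(B_{0:n})$ for $j \in \{d_0+1,\ldots,d\}$. Since the matrix is diagonal, its operator norm equals its largest diagonal entry, so $\parallel \left(\Lambda^s_{E^f_n(0)}\right)^n \parallel = \sigma_{d_0+1}(B_{0:n})$. By Oseledet's theorem and \eqref{eq:eigsingval} we have $\tfrac{1}{n}\log \sigma_{d_0+1}(B_{0:n}) \to \mu_{d_0+1} < 0$, so that, exactly as in the previous theorem, choosing $\rho>0$ with $\mu_{d_0+1}+\rho<0$ yields $\sigma_{d_0+1}(B_{0:n}) \leq e^{(\mu_{d_0+1}+\rho)n}$ for all large $n$, which vanishes. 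Collecting the remaining uniform bounds, $\parallel M_n \parallel \leq c_M$ by Lemma~\ref{lem:Mnbound}, $\parallel A_n \parallel \leq c_A$ by assumption, $\parallel \Delta_0 \parallel$ is a fixed constant, and $\parallel U^s_n \parallel = \parallel V^s_n \parallel = 1$ since they consist of orthonormal columns, I conclude that both $\parallel \Delta_n U^s_n (U^s_n)^T \parallel$ and $\parallel \Sigma_n U^s_n (U^s_n)^T \parallel$ are dominated by a constant multiple of $\sigma_{d_0+1}(B_{0:n})$ and hence tend to zero.

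The final step, which is the part specific to this theorem, is to replace the singular vectors $U^s_n$ by the backward stable Lyapunov vectors $L^{bs}_n$. Here I would write
\[
\Delta_n L^{bs}_n \left(L^{bs}_n\right)^T = \Delta_n U^s_n \left(U^s_n\right)^T + \Delta_n\left[L^{bs}_n \left(L^{bs}_n\right)^T - U^s_n \left(U^s_n\right)^T\right],
\]
and estimate the second term using $\parallel \Delta_n \parallel \leq c_\Delta$ from Lemma~\ref{lem:boundedness} together with the triangle-inequality bound $\parallel L^{bs}_n (L^{bs}_n)^T - U^s_n (U^s_n)^T \parallel \leq \left(\parallel L^{bs}_n \parallel + \parallel U^s_n \parallel\right)\parallel L^{bs}_n - U^s_n \parallel$, where $\parallel L^{bs}_n - U^s_n \parallel \to 0$ by the convergence recorded in Definition~\ref{def:projections} (which follows from \eqref{eq:backwardLyapunov}). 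Both terms then vanish, and the identical argument with $c_\Sigma$ in place of $c_\Delta$ disposes of $\Sigma_n$.

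I expect the main obstacle to be conceptual rather than computational, and it lies in this last swap: because $\Delta_n$ itself does not converge, one cannot pass to a limit inside the restriction, and instead must exploit that $\Delta_n$ stays \emph{bounded} while the perturbation $L^{bs}_n (L^{bs}_n)^T - U^s_n (U^s_n)^T$ \emph{vanishes}, so their product still goes to zero. The genuinely quantitative content---the exponential decay of $\sigma_{d_0+1}(B_{0:n})$---is already handed to us by Oseledet's theorem and the identities \eqref{eq:projuj}--\eqref{eq:projujsigma}, so beyond careful bookkeeping of the uniform bounds no deeper estimate should be needed.
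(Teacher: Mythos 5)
Your proposal is correct and follows essentially the same route as the paper: bound $\parallel \Delta_n U^s_n (U^s_n)^T\parallel$ via equation~(\ref{eq:projuj}), the uniform bound on $M_n$ from Lemma~\ref{lem:Mnbound}, and the exponential decay of the stable diagonal block (your $\sigma_{d_0+1}(B_{0:n}) \leq e^{(\mu_{d_0+1}+\rho)n}$ is just the paper's $\parallel \Lambda^s_{E^f_n(0)}\parallel^n \leq (1-\rho)^n$ rewritten through equation~(\ref{eq:eigsingval})), then swap $U^s_n$ for $L^{bs}_n$ by the same triangle inequality using Lemma~\ref{lem:boundedness} and equation~(\ref{eq:backwardLyapunov}). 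Your closing remark correctly identifies the key point that boundedness of $\Delta_n$, not convergence, is what makes the swap legitimate.
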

\begin{proof} By definition $\log (\lambda_j(E^f(0))) = \mu_j$, so that the eigen-values
$\lambda_j(E^f(0))< 1$ correspond to the stable Lyapunov
exponents.  Recalling that $\lambda_{d_0+1}(E_n^f(0))\geq \cdots\geq
\lambda_{d}(E_n^f(0))$ we find $\left \parallel \Lambda^s_{E_n^f(0)} \right \parallel = \lambda_{d_0+1}(E_n^f(0))$ and 
\begin{equation}
\label{eq:operatornormless1} \lim\limits_{n\rightarrow
\infty}\left \parallel \Lambda^s_{E_n^f(0)} \right \parallel =
\lambda_{d_0+1}(E^f(0)) < 1.
\end{equation}
Consequent to equation~(\ref{eq:operatornormless1}) we can choose a small $0< \rho <1$ and sufficiently large $n_1$ such that when $n\geq n_1$, $\left \parallel \Lambda^s_{E_n^f(0)} \right \parallel \leq 1-\rho$.

The restriction of $\Delta_n$ into the span of the columns of $U^s_{n}$ is given by
the equation~(\ref{eq:projuj}).
Note the column vectors of $V^s_{n}$ and $U^s_{n}$ are
orthogonal and of unit norm, hence $\parallel V^s_{n} \parallel
= \parallel U^s_{n} \parallel = 1$. We then find for $n \geq n_1$
\begin{equation}\label{eq:projectdecay}
\parallel \Delta_nU^s_{n}\left(U^s_{n}\right)^T \parallel \leq \left \parallel
\Lambda^s_{E^f_n(0)} \right \parallel^n \parallel M_n \parallel \parallel
\Delta_0 \parallel \leq (1-\rho)^n c_M \parallel \Delta_0 \parallel.
\end{equation} 
Consider,
\begin{align}
\parallel \Delta_nL^{bs}_n\left(L^{bs}_n\right)^T\parallel &\leq \parallel \Delta_n\parallel \parallel L^{bs}_n\left(L^{bs}_n\right)^T - U^s_{n}\left(U^s_{n}\right)^T \parallel + \parallel \Delta_n U^s_{n}\left(U^s_{n}\right)^T\parallel,
\end{align}
and Lemma \ref{lem:boundedness} states $\parallel \Delta_n \parallel$ is bounded.  Therefore, 
\begin{align}
\lim_{n\rightarrow \infty}\parallel \Delta_nL^{bs}_n\left(L^{bs}_n\right)^T\parallel = 0
\end{align}
by equations~(\ref{eq:backwardLyapunov}) and (\ref{eq:projectdecay}).  This may be similarly stated for the forecast error covariance.
\end{proof}

The forecast and analysis error covariance matrices for a generic
non-autonomous system in general do not converge, but the above
results entail that asymptotically the only relevant directions for
the error covariance matrices are the backwards unstable-neutral Lyapunov directions validating the central hypothesis made by Trevisan et al. \cite{TrevisanPalatella2011} in their proposed reduced rank Kalman filtering algorithms.  

An intriguing consequence from equation (\ref{eq:forwardstablespace}) in Theorem \ref{thm:eigenvalueszero} is the following corollary.
\begin{corollary}
Suppose that for some $\epsilon_0>0$, $N_0 >0$, and for every $0< \epsilon <\epsilon_0$, $n>N_0$, 
\begin{align}
\dim\left(\mathcal{E}^\epsilon(\Delta_n)\right) = d-d_0,
\end{align}
ie: asymptotically the rank deficiency of the analysis error covariance $\Delta_n$ is exactly of dimension $d-d_0$. Then the transformation $M_n^{-T} \Delta_0^{-1}$  asymptotically maps the forwards stable vectors $\{\l^f_j(0)\}_{j=d_0+1}^d$ into the span of the backwards stable vectors $\{\l^b_j(n)\}_{j=d_0+1}^d$ as $n\rightarrow \infty$.
\end{corollary}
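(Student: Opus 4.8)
The plan is to show that both the subspace $\text{span}\{\w_{j,n}\}_{j=d_0+1}^d$ (the image of the forward stable vectors under $M_n^{-T}\Delta_0^{-1}$) and the backward stable subspace $\text{range}(L^{bs}_n)$ asymptotically coincide with the kernel of $\Delta_n$, and then to conclude by comparing them through this common limit. Throughout, $\Pi_{\mathcal{S}}$ denotes orthogonal projection onto a subspace $\mathcal{S}$.

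First I would unpack the hypothesis. Since $\Delta_n \geq 0$, write its eigenvalues as $\lambda_1 \geq \cdots \geq \lambda_d \geq 0$. The requirement $\dim(\E^{\epsilon}(\Delta_n)) = d-d_0$ for every $\epsilon \in (0,\epsilon_0)$ and $n > N_0$ forces, on letting $\epsilon \to 0^+$, exactly $d-d_0$ of these eigenvalues to vanish, and on letting $\epsilon \to \epsilon_0^-$, the remaining $d_0$ to satisfy $\lambda_{d_0} \geq \epsilon_0$. Hence for $n > N_0$ the matrix $\Delta_n$ has a genuine $(d-d_0)$-dimensional kernel $\N_n \equiv \ker \Delta_n$, and on its orthogonal complement $\N_n^{\perp} = \text{range}(\Delta_n)$ it is bounded below uniformly, so that $\parallel \Delta_n \u \parallel \geq \epsilon_0 \parallel \Pi_{\N_n^{\perp}} \u \parallel$ for every $\u$. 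This uniform spectral gap is the feature of the hypothesis I would exploit.

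Next I would package the forward stable images as the columns of $W_n \equiv M_n^{-T}\Delta_0^{-1}[\l^f_{d_0+1}(0),\cdots,\l^f_d(0)]$, so that $\text{range}(W_n) = \text{span}\{\w_{j,n}\}_{j=d_0+1}^d$, and establish two estimates. On the one hand, the identity $\Delta_n M_n^{-T}\Delta_0^{-1} = B_{0:n}$ from the proof of Theorem~\ref{thm:eigenvalueszero} gives $\Delta_n W_n = B_{0:n}[\l^f_{d_0+1}(0),\cdots,\l^f_d(0)]$, whose columns decay like $e^{(\mu_j+\rho)n}$ by equation~(\ref{eq:forwardstablespace}), so $\parallel \Delta_n W_n \parallel \to 0$. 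On the other hand $W_n$ is uniformly well-conditioned from below: since $\parallel (M_n^{-T}\Delta_0^{-1})^{-1} \parallel = \parallel \Delta_0 M_n^T \parallel \leq \parallel \Delta_0 \parallel c_M$ by Lemma~\ref{lem:Mnbound} (using $\Delta_0>0$), and the columns $\{\l^f_j(0)\}$ are orthonormal, the smallest singular value of $W_n$ is at least $(\parallel \Delta_0 \parallel c_M)^{-1}>0$, uniformly in $n$. Combining the two, every unit $\u \in \text{range}(W_n)$ satisfies $\parallel \Delta_n \u \parallel \leq \parallel \Delta_n W_n \parallel\, c_M \parallel \Delta_0 \parallel \to 0$, and the spectral gap then yields $\parallel \Pi_{\N_n^{\perp}} \u \parallel \leq \parallel \Delta_n \u \parallel / \epsilon_0 \to 0$, uniformly over the unit sphere of $\text{range}(W_n)$. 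In other words the directed gap $\sup\{\parallel \Pi_{\N_n^{\perp}} \u \parallel : \u \in \text{range}(W_n),\ \parallel \u \parallel = 1\}$ tends to zero.

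The identical argument applied to $L^{bs}_n$, whose columns are orthonormal and for which Theorem~\ref{thm:epsilonnull} gives $\parallel \Delta_n L^{bs}_n (L^{bs}_n)^T \parallel \to 0$, shows that the directed gap from $\text{range}(L^{bs}_n)$ to $\N_n$ also vanishes. Finally I would invoke the standard fact that for two subspaces of equal dimension a vanishing directed gap is equivalent to a vanishing symmetric gap $\parallel \Pi_{\mathcal{S}_1} - \Pi_{\mathcal{S}_2} \parallel$; since $\dim \text{range}(W_n) = \dim \text{range}(L^{bs}_n) = \dim \N_n = d-d_0$, a triangle inequality on the projectors gives $\parallel \Pi_{\text{range}(W_n)} - \Pi_{\text{range}(L^{bs}_n)} \parallel \to 0$, which is precisely the assertion that $M_n^{-T}\Delta_0^{-1}$ maps $\{\l^f_j(0)\}_{j=d_0+1}^d$ into $\text{span}\{\l^b_j(n)\}_{j=d_0+1}^d$ asymptotically. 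The main obstacle I anticipate is exactly this upgrade from pointwise decay of $\parallel \Delta_n \u \parallel$ to genuine subspace convergence: it needs both the uniform lower bound $\epsilon_0$ on the nonzero spectrum (read off from the exact-dimension hypothesis) and the uniform conditioning of $W_n$ (read off from Lemma~\ref{lem:Mnbound}), together with the equal-dimension symmetry of the gap. Without either uniformity the directed gaps need not vanish, and without the dimension equality the one-sided control would not promote to two-sided convergence.
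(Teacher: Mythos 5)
Your proof is correct, but there is an important contextual point: the paper itself gives \emph{no} proof of this corollary --- it is stated bare, presented only as ``an intriguing consequence'' of equation~(\ref{eq:forwardstablespace}), so there is no line-by-line comparison to make. Your argument supplies precisely the details the authors leave implicit, and it follows the route they gesture at: both $\mathrm{span}\{\w_{j,n}\}_{j=d_0+1}^d$ (via equation~(\ref{eq:forwardstablespace}) of Theorem~\ref{thm:eigenvalueszero}) and $\mathrm{range}(L^{bs}_n)$ (via Theorem~\ref{thm:epsilonnull}) are squeezed into the same $(d-d_0)$-dimensional near-kernel of $\Delta_n$, and equality of dimensions converts these two one-sided inclusions into closeness of the subspaces themselves. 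Two of your steps are genuine additions rather than bookkeeping. First, the paper's own lower bound is only per-vector ($c_{\w}\leq\|\w_{j,n}\|$), which does not control arbitrary unit vectors in $\mathrm{range}(W_n)$; your strengthening to the matrix-level bound $\sigma_{\min}(W_n)\geq(\|\Delta_0\|c_M)^{-1}$, via $\|(M_n^{-T}\Delta_0^{-1})^{-1}\|=\|\Delta_0 M_n^{T}\|\leq\|\Delta_0\|c_M$ from Lemma~\ref{lem:Mnbound}, is what legitimizes the subspace-level estimate. Second, the equal-dimension gap fact (vanishing directed gap implies vanishing symmetric gap) is exactly the right tool, and it is where the hypothesis $\dim\E^{\epsilon}(\Delta_n)=d-d_0$ (as opposed to the inequality $\geq d-d_0$ of Theorem~\ref{thm:eigenvalueszero}) enters irreplaceably; without it the one-sided control would not upgrade, as you correctly flag.

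One caveat you should make explicit, though it is a defect of the statement rather than of your argument: your (faithful) unpacking of the hypothesis --- letting $\epsilon\to 0^{+}$ with $N_0$ fixed forces exactly $d-d_0$ eigenvalues of $\Delta_n$ to vanish at finite $n$ --- collides with the paper's standing assumption $\Delta_0>0$. With $A_n$ invertible and $Q_n>0$, the Riccati recursion preserves strict positive definiteness: $\Sigma_n=A_n\Delta_{n-1}A_n^{T}>0$ and $\Delta_n=(\Sigma_n^{-1}+H_n^{T}Q_n^{-1}H_n)^{-1}>0$, so $\ker\Delta_n=\{0\}$ for every finite $n$ and the hypothesis, read literally, is unsatisfiable; the corollary would then hold vacuously. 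The intended reading is surely the quantifier-swapped one (for every $\epsilon\in(0,\epsilon_0)$ there exists $N_0(\epsilon)$), i.e., eigenvalue collapse without exact vanishing. Your proof survives this reinterpretation essentially unchanged: replace $\N_n=\ker\Delta_n$ by the span $\mathcal{S}_n$ of eigenvectors of $\Delta_n$ with eigenvalue below a fixed $\epsilon<\epsilon_0$, note that on $\mathcal{S}_n^{\perp}$ the eigenvalues are at least $\epsilon$ (giving the spectral gap with constant $\epsilon$ in place of $\epsilon_0$), and run the same two directed-gap estimates and the equal-dimension argument, since the conclusion does not reference $\epsilon$. Stating this adjustment would make the write-up complete.
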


\subsection{Numerical results for a $30$-dimensional system}
\label{sec:errcovnumer}

Below we provide an illustration for this \emph{asymptotic rank deficiency} property of the error covariance matrices. The 
state space vector $\x_n$ and the observation vector $\y_n$
have dimension $d=30$ and $q=10$ respectively. This choice
is arbitrary and our simulations with different $d$ and $q$ have shown qualitatively equivalent results.

The time-varying, invertible propagators $A_n \in \mathbb{R}^{30
  \times 30}$, the observation error covariance matrices $Q_n \in
\mathbb{R}^{10 \times 10}$ and the observation matrices $H_n \in
\mathbb{R}^{10 \times 30}$ were all randomly generated for
sufficiently large $n$. We employed the $QR$ method \cite{Legras96} to
numerically compute the Lyapunov vectors and the Lyapunov
exponents and it was found that the number of non-negative Lyapunov exponents was $d_0 = 14$.
Starting from a random positive-definite $\Delta_0$, the sequence $(\Sigma_n, \Delta_n)$ was generated based on the
Kalman update equations~(\ref{eq:Sigmaupdate})-(\ref{eq:Deltaupdate}). For every $n$ we
computed the eigen-values of $\Delta_n$ sorted in descending order.

Figure~\ref{fig:eigenvaluesconvergence} shows the eigen-values of
$\Delta_n$ as a function of $n$. Barring the dominant $14$
eigen-values, the rest converge to zero serving as a visual testament
to Theorem~\ref{thm:eigenvalueszero}. Furthermore, we also calculated the
norm $\parallel \Delta_n \u_{j,n}\parallel, j \in \{1,2,\cdots,d\}$
for all $n$ and plot them in Figure~\ref{fig:normprojcoeff}. These
norm values are unsorted meaning that the topmost line in
Figure~\ref{fig:normprojcoeff} represent the values $\parallel
\Delta_n \u_{1,n}\parallel$ and the bottommost line denote $\parallel
\Delta_n \u_{d,n}\parallel$ for different values of $n$.  For $j > d_0
= 14$, $\parallel \Delta_n \u_{j,n} \parallel$ approaches zero
suggesting that as $n \rightarrow \infty$, the row space of $\Delta_n$
(and also $\Sigma_n$) coincides the space spanned by the
unstable-neutral, backward Lyapunov vectors, i.e., the bounds in
inequalities~\eqref{dim-unstable} are saturated.

\begin{figure}[!ht]
\begin{minipage}[c]{0.495\linewidth} \centering
	\includegraphics[width=1\textwidth]{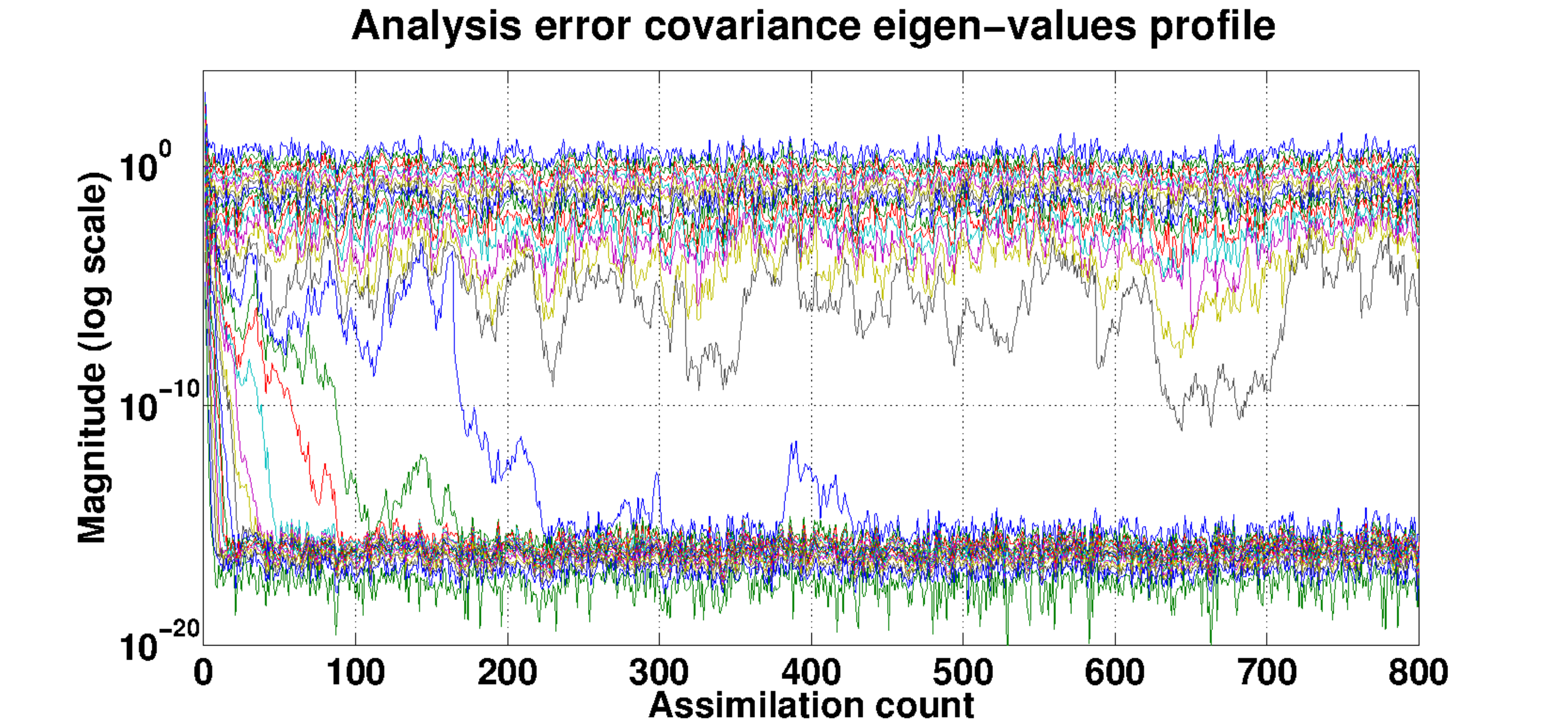}
	\caption{Profile of the eigen-values of $\Delta_n$. Counting establishes that the bottom $16$ eigen-values
converge to zero.}
	\label{fig:eigenvaluesconvergence}
\end{minipage}%
\begin{minipage}[c]{0.495\linewidth} \centering
	\includegraphics[width=1\textwidth]{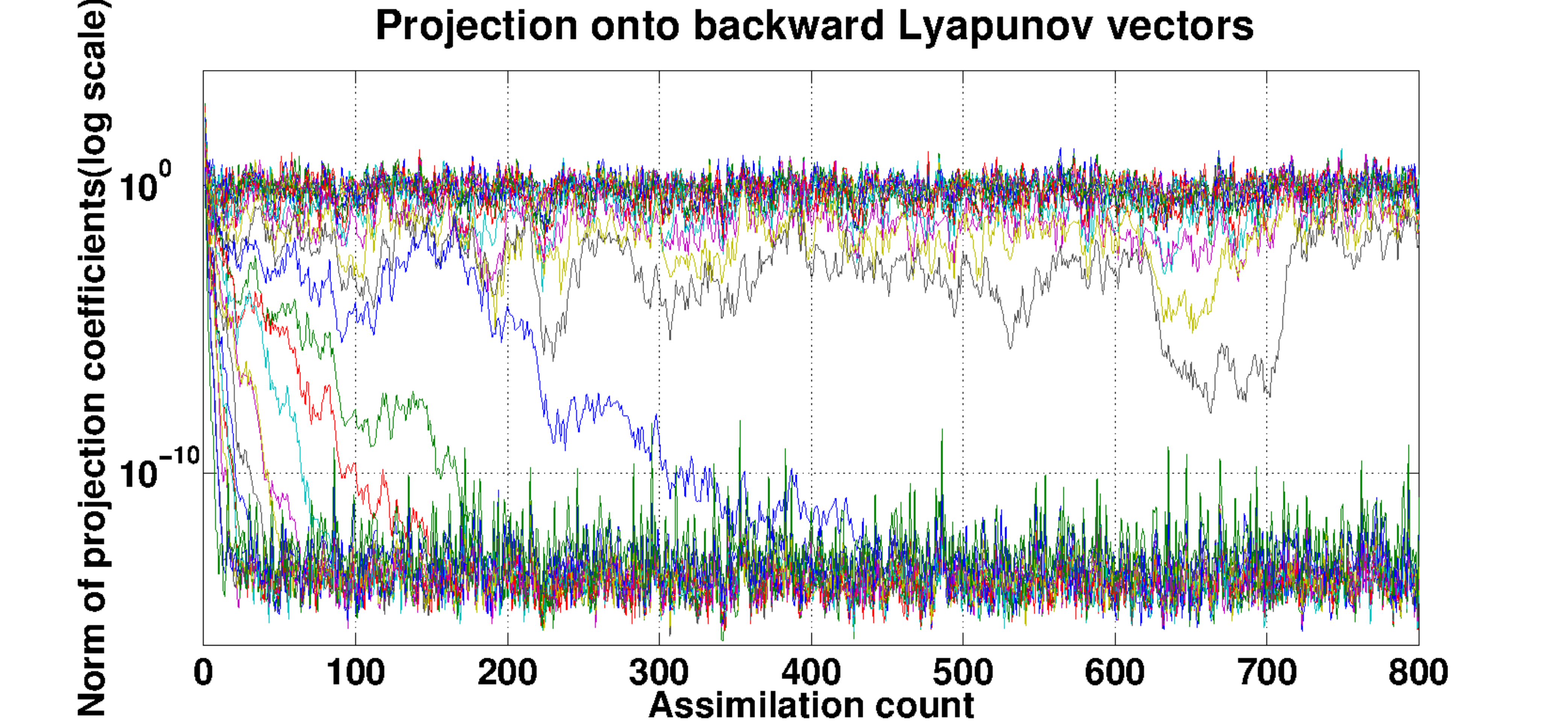}
	\caption{Norm of the projection \newline coefficients $\parallel \Delta_n \u_{j,n}\parallel$ for varying observation time $n$.}
	\label{fig:normprojcoeff}
\end{minipage}
\end{figure}

\section{Autonomous linear dynamical systems}
\label{sec:autosys}
\subsection{Null space characterization for autonomous systems}
\label{sec:autnull}
The noiseless, linear autonomous system can be defined from
equation~(\ref{eq:nonaut}), with the additional assumptions that $A_n
\equiv A$, $H_n \equiv H$, $Q_n \equiv Q$, are fixed matrices for all
$n$ --- therefore the results about the asymptotic rank deficiency property of the error covariance
matrices in Section~\ref{sec:nonautosys} also apply to autonomous systems.  However, a stronger statement can be made for time invariant systems because the backwards Lyapunov vectors will not vary in time.  In fact, the result in this section is even valid for the case
when only the dynamical system is autonomous ($A_n \equiv A$) but the
observation process is time dependent ($H_n$ and $Q_n$ depend on $n$).  

Akin to the non-autonomous case we define
\begin{equation}
\label{eq:EbnEfnauto} 
E^b_n \equiv \left[A^n (A^n)^\ast\right]^{\frac{1}{2n}}, \quad
 E^f_n \equiv\left[(A^n)^\ast A^n\right]^{\frac{1}{2n}}
\end{equation} 
and the similarity with equation~(\ref{eq:EbnEfn}) can readily be seen by setting $B_{m:m+n} = A^n,
\forall m$ in equation~(\ref{eq:Bn}) (hence the omission of the time index
$m$). As before, the existence of the limits
\begin{equation}
\label{eq:defEf}
E^b \equiv\lim\limits_{n \rightarrow \infty}E^b_n \,, \quad
E^f \equiv\lim\limits_{n \rightarrow \infty}E^f_n
\end{equation}
is guaranteed by Oseledets theorem \cite{Legras96}. The eigen-vectors
of $E^b$ and $E^f$ are called the backward and forward Lyapunov
vectors, represented here as the columns vectors of $L^b$ and $L^f$
ordered left to right from the most unstable direction---corresponding
to the largest Lyapunov exponent---to the most stable
direction---corresponding to the smallest Lyapunov exponent.
Specifically, the Lyapunov vectors are defined globally and have
\emph{no} dependence on the time in the linear, autonomous case.  Without the time dependence on the backwards stable Lyapunov vectors, we obtain a stronger statement about the asymptotic null space of the covariance matrices. 

\begin{mydef} Let $L^{bs} \equiv L^{bs}_n = \left[\l^b_{d_0+1},\cdots,\l^b_{d}\right]$. Note that Theorem~\ref{thm:eigspaceequality} proved in Appendix~\ref{sec:eigspaceLypVecAut} states tells us that the span of the columns of $L^{bs}$ is equal to $\E^1(A^T)$
\end{mydef}

\begin{corollary}
\label{thm:nullspaceS1AT}
Consider the uniformly completely observable, perfect-model, linear, autonomous system defined from equation~(\ref{eq:nonaut}) where $A_n\equiv A$, but $H_n$ and $Q_n$ may depend on $n$ and the initial state $\x_0$ has a Gaussian law with covariance $\Delta_0$. Then the restriction of the analysis and forecast error covariances onto $\E^1(A^T)$ tend to zero as $n\rightarrow\infty$.  That is
\begin{align}
\lim_{n\rightarrow \infty}\parallel \Delta_nL^{bs}\left(L^{bs}\right)^T\parallel = 0\\
\lim_{n\rightarrow \infty}\parallel \Sigma_nL^{bs}\left(L^{bs}\right)^T\parallel = 0
\end{align}   
\end{corollary}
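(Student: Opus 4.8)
The plan is to recognize that the autonomous system is a special case of the non-autonomous framework of Section~\ref{sec:nonautosys}, so that almost all of the work has already been done in Theorem~\ref{thm:epsilonnull}; the only genuinely new ingredients are the time-invariance of the backward Lyapunov vectors and the algebraic identification of their stable span with $\E^1(A^T)$.

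First I would check that the hypotheses of Theorem~\ref{thm:epsilonnull} are in force. Setting $A_n \equiv A$ leaves the observation process $(H_n,Q_n)$ free to depend on $n$, so system~(\ref{eq:nonaut}) with these choices is still a uniformly completely observable, perfect-model, linear, non-autonomous system, and the boundedness $\parallel A_n \parallel \leq \parallel A \parallel$ is automatic. Hence Theorem~\ref{thm:epsilonnull} applies and yields $\lim_{n\rightarrow\infty}\parallel \Delta_n L^{bs}_n (L^{bs}_n)^T\parallel = 0$ together with the analogous statement for $\Sigma_n$.

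Next I would remove the time dependence from $L^{bs}_n$. Because $B_{m-n:m}=A^n$ for every base time $m$, the matrix $E^b_n(m)=[A^n (A^n)^\ast]^{1/2n}$ is independent of $m$, and so is its limit $E^b(m)=E^b$; consequently its eigenvectors, the backward Lyapunov vectors, do not vary in time, i.e.\ $L^{bs}_n = L^{bs}$ for all $n$. Substituting this equality into the limits furnished by Theorem~\ref{thm:epsilonnull} gives at once $\lim_{n\rightarrow\infty}\parallel \Delta_n L^{bs}(L^{bs})^T\parallel = 0$ and likewise for $\Sigma_n$.

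Finally I would translate the backward stable Lyapunov span into a purely spectral object. Invoking Theorem~\ref{thm:eigspaceequality}, the span of the columns of $L^{bs}$ coincides with $\E^1(A^T)$. Since the backward Lyapunov vectors are eigenvectors of the symmetric matrix $E^b$ and hence orthonormal, $L^{bs}(L^{bs})^T$ is precisely the orthogonal projector onto $\E^1(A^T)$, so the two displayed limits are exactly the restrictions of $\Delta_n$ and $\Sigma_n$ onto $\E^1(A^T)$, completing the argument. The main obstacle is not in this corollary, which reduces to a short chain of substitutions, but in the deferred Theorem~\ref{thm:eigspaceequality} of Appendix~\ref{sec:eigspaceLypVecAut}, where one must relate the limiting symmetric matrix $E^b=\lim_{n\rightarrow\infty}[A^n(A^n)^T]^{1/2n}$ to the Jordan structure of $A^T$ and show that its eigenvectors for eigenvalues below one in modulus span exactly $\E^1(A^T)$.
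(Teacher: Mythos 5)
Your proposal is correct and follows essentially the same route as the paper, whose proof is exactly the combination of Theorem~\ref{thm:epsilonnull} with Theorem~\ref{thm:eigspaceequality}. The extra details you supply---verifying the hypotheses of Theorem~\ref{thm:epsilonnull} in the autonomous specialization, noting that $E^b_n(m)=[A^n(A^n)^\ast]^{1/2n}$ is independent of $m$ so that $L^{bs}_n\equiv L^{bs}$, and observing that $L^{bs}(L^{bs})^T$ is the orthogonal projector onto $\E^1(A^T)$---are precisely the steps the paper leaves implicit in calling the result a ``straightforward consequence.''
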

\begin{proof}
Combining Theorem~\ref{thm:epsilonnull} with Theorem~\ref{thm:eigspaceequality} this is a straightforward consequence.
\end{proof}

In all our numerical simulations with arbitrary (and completely observable) choices of $A$, $H$ and $Q$ we have additionally observed \emph{convergence} of  $\Delta_n$ and $\Sigma_n$ to a fixed $\Delta$ and $\Sigma$ respectively and seen their null spaces contain $\E^1\left(A^T\right)$ as stated by Corollary~\ref{thm:nullspaceS1AT} (refer Section~\ref{sec:autnumeric}).  Considering the recent work of Ni et al. \cite{Ni2016}, this strongly suggests that the classical result of the stable Riccati equation for completely observable and controllable, discrete autonomous systems \cite{Kumar86} has an analogue in the case of completely observable, perfect model systems.

\subsection{Numerical results for linear autonomous system} 
\label{sec:autnumeric}
We choose a non-singular matrix $A \in \mathbb{R}^{30 \times 30}$ ($d = 30$)
consisting of random entries and set $d_0 =12$ of its eigen-values to
be greater or equal to one in absolute magnitude. We ran the Kalman filtering
system long enough and observed that the analysis error
covariance do converge to a fixed $\Delta$ and then projected $\Delta$ onto
the generalized eigen-space of
$A^T$. Figure~\ref{fig:autsys_eigenValuesAandLyp} plots the absolute
magnitude of eigen-values of $A$ sorted in descending order
$\left(\mid\lambda_1(A)\mid \geq \cdots \geq \mid \lambda_d(A)\mid
\right)$ in blue color and shows the Lyapunov
exponents for this system in red shade where we note that the number of non-negative
Lyapunov exponents is exactly $12$ tantamount to the number of
eigen-values of $A$ greater than or equal to one in magnitude. Additionally, it
can be verified that the Lyapunov exponents are just the logarithm (to
the base $e$) of the absolute magnitude eigen-values of $A$. Recalling
the definition of the Lyapunov exponents from equation~(\ref{def:LyapunovExp}),
this equality also lends credence to our
Theorem~\ref{thm:eigvalcon}. The plot in
Figure~\ref{fig:autsys_normprojcoeff} displays $\parallel \Delta
\left(\v_j\left(A^T\right)\right)\parallel; j \in \{1,2,\cdots,d\}$
where $\v_j\left(A^T\right)$ is the generalized eigen-vector of
$\lambda_j(A)$. Observe that when $j > 12$, the norm of the projected
coefficients is zero rendering a visual confirmation to
Corollary~\ref{thm:nullspaceS1AT}. 

\begin{figure}[!ht]
\begin{minipage}[c]{0.495\linewidth} \centering
	\includegraphics[width=1\textwidth]{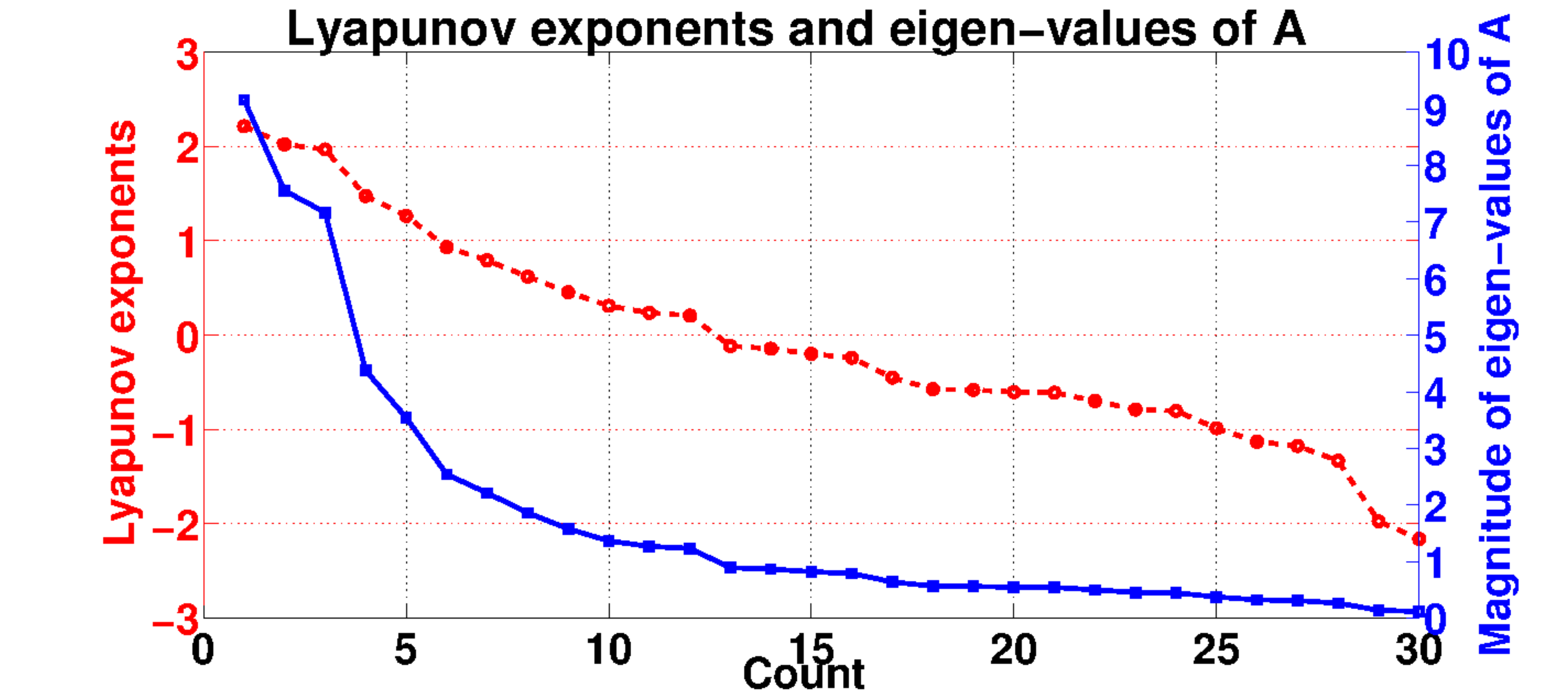}
	\caption{Lyapunov exponents in blue and the magnitude of the eigen-values of $A$ in red.}
	\label{fig:autsys_eigenValuesAandLyp}
\end{minipage}%
\begin{minipage}[c]{0.495\linewidth} \centering
	\includegraphics[width=1\textwidth]{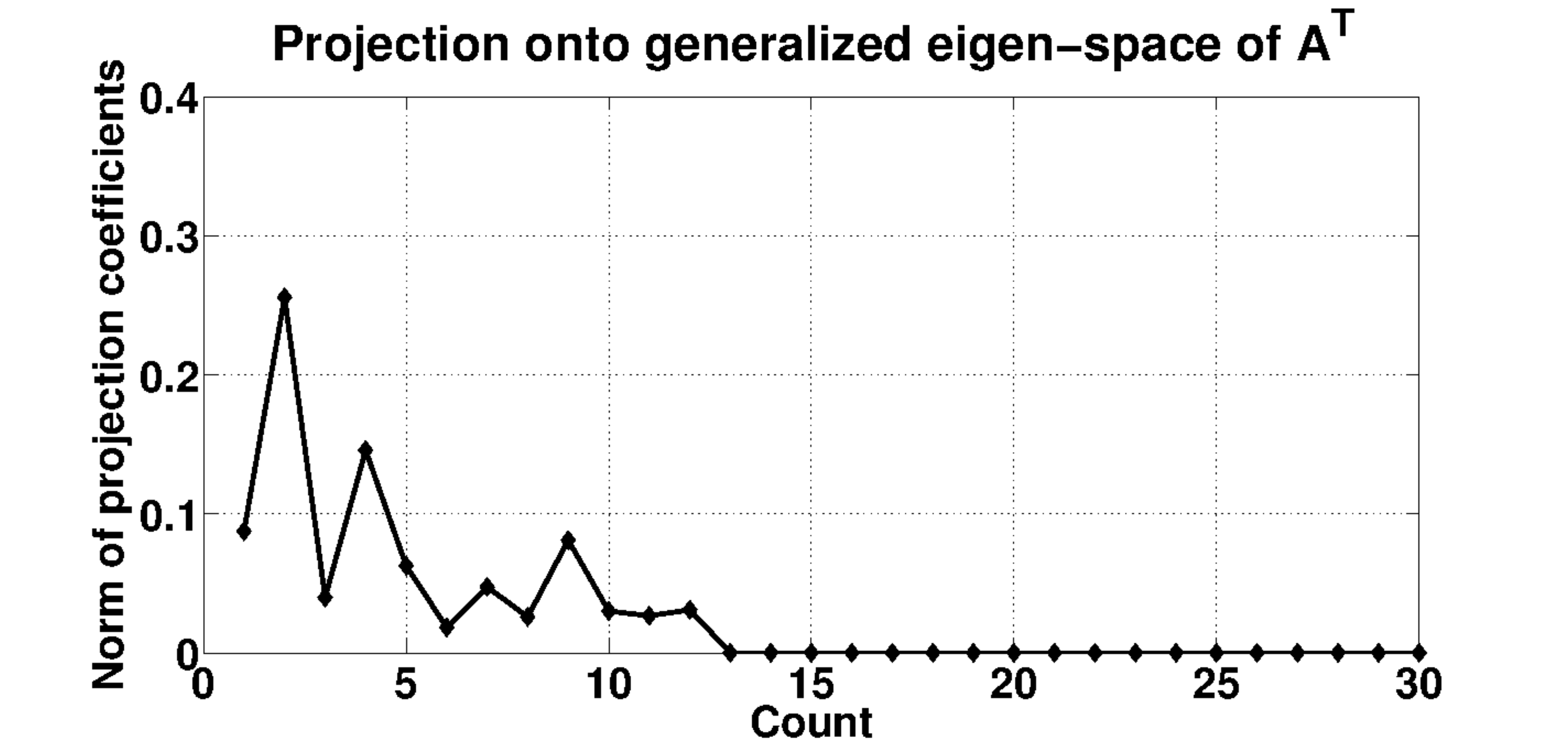}
	\caption{Norm of the projection coefficients onto the generalized
eigen-space of $A^T$.}
	\label{fig:autsys_normprojcoeff}
\end{minipage}
\end{figure}

\section{Discussion}
\label{sec:discussion}
We have shown that under sequential Kalman filtering, the error covariance for a linear, perfect-model, conditionally Gaussian
systems asymptotically collapses to the subspaces spanned by the
backwards unstable Lyapunov vectors. This has been known to practitioners in the forecasting community
\cite{Bonnabel13}, but had yet to be stated in precise mathematical terms. In particular, 
this foundational work validates the underlying assumptions and methodology of AUS.

At the same time, these results open many new questions for ongoing research
related to AUS algorithms. For instance, the present results do not formally show the equivalence of a fully reduced-rank algorithm such
as EKF-AUS applied in such a setting. The conditions that imply the
convergence of the covariance matrices, given arbitrary
low rank symmetric matrices chosen as initial conditions 
have yet to be established.  Recent work strongly suggests that filter stability for discrete, perfect model systems can be demonstrated under sufficient observability hypotheses alone \cite{Ni2016}.  Determining the necessary hypotheses for stability of the discrete Kalman filter with low rank initializations of the prior covariance matrix in perfect model systems will be the subject of the sequel to our work. 

Additionally there are conceptual issues to be resolved in bridging
the results for linear systems to non-linear settings; the
former having the advantage of Lyapunov vectors being defined
globally in space, whereas the formulation must change in a non-linear
setting, respecting the dependence on the underlying path. Both of these
directions of inquiry open rich areas for mathematical research and
future algorithm design.

While the ultimate goal of DA is a precise estimate of
state for chaotic dynamics, it is critical to understand the uncertainty of the prediction. An exact calculation of
the posterior distribution of states for a high dimensional, complex
system is computationally intractable; as computational resources
increase, so will model complexity and thus computational efficiency
alone will not resolve this issue. This work provides an idealized,
but general framework for future investigations into low dimensional
approximations for uncertainty calculation. We hope that a precise mathematical framework for
understanding the nature of uncertainty for linear systems will lead
to innovative research to surmount these challenges.
\appendix
\section{Eigen-values, singular-values and Lyapunov exponents of
linear autonomous systems} 
The results established in this Appendix and
Appendix~\ref{sec:eigspaceLypVecAut} should be treated as an
independent body of work elucidating the relationship between various
concepts in linear, autonomous systems and not restricted to the
domain of DA and filtering theory. While these
relationships are known and can be retrieved from multiple sources in
the literature, we have explicitly proved them here for
completeness. Readers familiar with these mathematical connections may
choose to skip through these sections without any loss of continuity.

\label{sec:eigvalSigvalAut} Based on the definition of the matrix
$E^f_n$ in equation~(\ref{eq:EbnEfnauto}) we find $\lambda_j(E^f_n)=
\left[\sigma_j(A^n)\right]^{\frac{1}{n}}$. As $E^f_n \rightarrow E^f$
we also have
\begin{equation}\begin{matrix}
\label{eq:eigvallimit} \lim\limits_{n \rightarrow \infty}
\lambda_j(E^f_n) = \lim\limits_{n \rightarrow \infty}
\left[\sigma_j(A^n)\right]^{\frac{1}{n}} = \lambda_j(E^f) & & j
\in\{1,2,\cdots,d\}
\end{matrix}\end{equation} where the eigen-values $\lambda_j$ and
singular-values $\sigma_j$ are ordered descending in norm.  Dropping
the label for brevity let $J = V_A^{-1} A V_A$ (instead of $J(A)$) be
the Jordan canonical form of $A$. It is straightforward to see that
$A^n = V_A J^n V_A^{-1}$ for any integer $n$. The following inequality stated in Theorem~9 of \cite{Merikoshi04} is
quite useful. For any two square matrices $Z_1$ and $Z_2$ we have
\begin{equation}\begin{matrix}
\label{eq:inequality1} \sigma_j(Z_1) \sigma_d(Z_2) &\leq& \sigma_j(Z_1
Z_2) &\leq& \sigma_j(Z_1) \sigma_1(Z_2)
\end{matrix}\end{equation} Since the singular-values of both the
matrix and its transpose are the same, it follows that
\begin{equation}\begin{matrix}
\label{eq:inequality2} \sigma_d(Z_1) \sigma_j(Z_2) &\leq& \sigma_j(Z_1
Z_2) &\leq& \sigma_1(Z_1) \sigma_j(Z_2).
\end{matrix}\end{equation}
\begin{lemma}
\label{lemma:Jordansigrelation} For any square matrix $Z = V_Z J(Z) V_Z^{-1}$
\begin{equation*} \lim\limits_{n \rightarrow \infty}
\left[\sigma_j(Z^n)\right]^{\frac{1}{n}} = \lim\limits_{n \rightarrow
\infty} \left[\sigma_j(J(Z)^n)\right]^{\frac{1}{n}}
\end{equation*}
\end{lemma}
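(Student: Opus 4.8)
The plan is to exploit the fixed similarity $Z^n = V_Z\, J(Z)^n\, V_Z^{-1}$ together with the multiplicative singular-value bounds~(\ref{eq:inequality1}) and~(\ref{eq:inequality2}), which let me control the $j$-th singular value of a product in terms of the $j$-th singular value of one factor and the extreme singular values of the other. Writing $J \equiv J(Z)$, I would first view $Z^n = V_Z\,(J^n V_Z^{-1})$ and apply~(\ref{eq:inequality2}) to pull out the fixed factor $V_Z$ while keeping $\sigma_j$ attached to $J^n V_Z^{-1}$; then view $J^n V_Z^{-1} = J^n\,(V_Z^{-1})$ and apply~(\ref{eq:inequality1}) to pull out the fixed factor $V_Z^{-1}$ while keeping $\sigma_j$ attached to $J^n$. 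Chaining the two sandwiches gives, for every $n$,
\begin{equation*}
\sigma_d(V_Z)\,\sigma_d(V_Z^{-1})\,\sigma_j(J^n) \le \sigma_j(Z^n) \le \sigma_1(V_Z)\,\sigma_1(V_Z^{-1})\,\sigma_j(J^n).
\end{equation*}

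Next I would take $n$-th roots throughout. Because $Z = V_Z J V_Z^{-1}$ is a genuine similarity, $V_Z$ is invertible and hence the four quantities $\sigma_1(V_Z), \sigma_d(V_Z), \sigma_1(V_Z^{-1}), \sigma_d(V_Z^{-1})$ are strictly positive constants independent of $n$. Consequently $[\sigma_d(V_Z)\sigma_d(V_Z^{-1})]^{1/n} \to 1$ and $[\sigma_1(V_Z)\sigma_1(V_Z^{-1})]^{1/n}\to 1$ as $n\to\infty$, so both outer bounds reduce asymptotically to $[\sigma_j(J^n)]^{1/n}$. A squeeze argument then yields $\lim_{n\to\infty}[\sigma_j(Z^n)]^{1/n} = \lim_{n\to\infty}[\sigma_j(J^n)]^{1/n}$, the existence of either limit forcing the other. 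In the application of interest $Z = A$ is non-singular, so all singular values of $A^n$ and $J^n$ are positive and the limit is guaranteed to exist by Oseledets' theorem, exactly as recorded in~(\ref{eq:eigvallimit}).

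There is no substantial obstacle here; the only points requiring care are bookkeeping ones. I must apply the two product inequalities in the correct order, selecting the appropriate one of~(\ref{eq:inequality1})/(\ref{eq:inequality2}) at each step so that the surviving factor is always $\sigma_j$ of the Jordan power and never $\sigma_j$ of a fixed matrix. I also want the constants bounded away from zero, which is precisely the content of $V_Z$ being invertible, and to dispose of the degenerate possibility of a vanishing singular value by noting that in that case the upper bound forces $\sigma_j(Z^n)=0$ as well, so both $n$-th roots are zero and the asserted equality holds trivially.
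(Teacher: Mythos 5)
Your proposal is correct and follows essentially the same route as the paper: both sandwich $\sigma_j(Z^n)$ between $\sigma_d(V_Z)\sigma_d(V_Z^{-1})\sigma_j(J(Z)^n)$ and $\sigma_1(V_Z)\sigma_1(V_Z^{-1})\sigma_j(J(Z)^n)$ via inequalities~(\ref{eq:inequality1}) and~(\ref{eq:inequality2}), then take $n$-th roots and let $n\rightarrow\infty$. Your write-up merely makes explicit the bookkeeping (which inequality is applied to which factor, positivity of the constants, the degenerate zero singular-value case) that the paper leaves implicit.
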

\begin{proof} Inequalities~(\ref{eq:inequality1}) and (\ref{eq:inequality2}) leads to
\begin{equation*} 
 \sigma_d\left(V_Z\right) \sigma_d\left(V_Z^{-1}\right) \sigma_j(J(Z)^n) \leq \sigma_j(Z^n) \leq \sigma_1\left(V_Z\right)
\sigma_1\left(V_Z^{-1}\right) \sigma_j(J(Z)^n).
\end{equation*} 
Raising each term to the power $1/n$ and letting $n\rightarrow \infty$ proves the result.
\end{proof}
\begin{corollary}
\label{coro:EJrelation} For any matrix $A$ let $E^f$ be defined as in equation~(\ref{eq:defEf}) and $J$ be the Jordan canonical form of $A$. Then
$\lambda_j(E^f) = \lim\limits_{n\rightarrow \infty} \left[\sigma_j(J^n)\right]^{\frac{1}{n}}, j \in\{1,2,\cdots,d\}.$
\end{corollary}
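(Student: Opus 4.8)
The plan is to obtain this corollary as an immediate consequence of the two facts already established just above it, namely equation~(\ref{eq:eigvallimit}) and Lemma~\ref{lemma:Jordansigrelation}. The corollary asserts an equality between $\lambda_j(E^f)$ and the asymptotic normalized singular values of the Jordan form $J$; the natural bridge is the asymptotic normalized singular values of $A^n$ itself, which already appear in equation~(\ref{eq:eigvallimit}).

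First I would recall from equation~(\ref{eq:eigvallimit}) that, since $E^f_n \rightarrow E^f$ and $\lambda_j(E^f_n) = \left[\sigma_j(A^n)\right]^{1/n}$, one has the identity
\begin{equation*}
\lambda_j(E^f) = \lim_{n \rightarrow \infty} \left[\sigma_j(A^n)\right]^{\frac{1}{n}}, \quad j \in \{1,\ldots,d\}.
\end{equation*}
This already expresses the left-hand side of the corollary in terms of the singular values of the powers of $A$.

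Next I would invoke Lemma~\ref{lemma:Jordansigrelation} with the choice $Z = A$, so that $J(Z) = J$ is exactly the Jordan canonical form appearing in the statement. The lemma yields
\begin{equation*}
\lim_{n \rightarrow \infty} \left[\sigma_j(A^n)\right]^{\frac{1}{n}} = \lim_{n \rightarrow \infty} \left[\sigma_j(J^n)\right]^{\frac{1}{n}}.
\end{equation*}
Chaining these two displayed equalities then gives $\lambda_j(E^f) = \lim_{n\rightarrow\infty}\left[\sigma_j(J^n)\right]^{1/n}$ for each $j$, which is precisely the claim.

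I do not expect any genuine obstacle here: the corollary is a formal composition of the preceding results, with all the analytic content (the existence of the limits via Oseledets, and the robustness of normalized singular values under the similarity $A^n = V_A J^n V_A^{-1}$ via the submultiplicative bounds~(\ref{eq:inequality1}) and~(\ref{eq:inequality2})) already discharged in equation~(\ref{eq:eigvallimit}) and in the proof of Lemma~\ref{lemma:Jordansigrelation}. The only point worth stating carefully is that the singular-value indexing is consistent throughout — the $\sigma_j$ are ordered descending in norm in all three quantities — so that the equality holds index by index rather than merely as a set. Since this ordering convention is fixed in Section~\ref{sec:notations} and preserved across equation~(\ref{eq:eigvallimit}) and Lemma~\ref{lemma:Jordansigrelation}, the componentwise conclusion is automatic.
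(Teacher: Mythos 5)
Your proof is correct and is essentially identical to the paper's own argument: the paper's proof also consists of applying Lemma~\ref{lemma:Jordansigrelation} with $Z=A$ in conjunction with equation~(\ref{eq:eigvallimit}). Your version simply writes out the chaining of the two limits explicitly and notes the indexing convention, which the paper leaves implicit.
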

\begin{proof} The results follows immediately when we employ
Lemma~\ref{lemma:Jordansigrelation} setting $Z=A$ in conjunction with equation~(\ref{eq:eigvallimit}).
\end{proof}

The theorem below establishes the relation between the eigen-values of
the time invariant propagator $A$ and the limit matrix $E^f$.
\begin{theorem}
\label{thm:eigvalcon} For any matrix $A$ let the matrix $E^f$ be
defined as in equation~(\ref{eq:defEf}). Then the eigen-values of $E^f$ equal
the absolute magnitude eigen-values of $A$, i.e, $\lambda_j(E^f) =
\left|\lambda_j(A)\right|, j \in\{1,2,\ldots,d\}$.
\end{theorem}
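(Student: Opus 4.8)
The plan is to reduce everything to the Jordan canonical form $J$ of $A$ and to analyze the growth of the singular values of $J^n$, since Corollary~\ref{coro:EJrelation} already gives $\lambda_j(E^f) = \lim_{n\rightarrow\infty} [\sigma_j(J^n)]^{1/n}$. First I would exploit that $J$ is block diagonal: each power $J^n$ is then block diagonal with one block $J_k^n$ per Jordan block of $A$, and the singular values of a block-diagonal matrix are exactly the union, counted with multiplicity, of the singular values of its blocks. Thus it suffices to understand $[\sigma_i(J_k^n)]^{1/n}$ for a single Jordan block $J_k = \lambda I + N$ of size $m$, where $N$ is the nilpotent superdiagonal shift with $N^m = 0$ and $\parallel N^i\parallel = 1$ for $i<m$.

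The heart of the argument is to show that, for a fixed Jordan block with eigenvalue $\lambda$, every singular value of $J_k^n$ raised to the power $1/n$ converges to $|\lambda|$. For the upper bound I would expand $J_k^n = \sum_{i=0}^{m-1}\binom{n}{i}\lambda^{n-i}N^i$ and estimate $\sigma_1(J_k^n)\leq\parallel J_k^n\parallel\leq c\, n^{m-1}|\lambda|^n$ for a constant $c$ depending on $\lambda$ and $m$ but not on $n$; raising to $1/n$ and letting $n\rightarrow\infty$ forces $[\sigma_1(J_k^n)]^{1/n}\rightarrow|\lambda|$. For the more delicate lower bound on the smallest singular value I would use the determinant identity $\prod_{i=1}^m \sigma_i(J_k^n) = |\det J_k^n| = |\lambda|^{mn}$ together with the upper bound on $\sigma_1$ to obtain $\sigma_m(J_k^n) \geq |\lambda|^{mn}/\sigma_1(J_k^n)^{m-1} \geq |\lambda|^n/(c\,n^{m-1})^{m-1}$, whence $[\sigma_m(J_k^n)]^{1/n}\rightarrow|\lambda|$ as well. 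Since $\sigma_1(J_k^n) \geq \sigma_i(J_k^n) \geq \sigma_m(J_k^n)$, the squeeze theorem then yields $[\sigma_i(J_k^n)]^{1/n}\rightarrow|\lambda|$ for every $i$ in the block. When $\lambda = 0$ the block is nilpotent, so $J_k^n = 0$ for $n \geq m$ and the conclusion $[\sigma_i(J_k^n)]^{1/n}\rightarrow 0 = |\lambda|$ holds trivially, which also sidesteps the division by $|\lambda|^{mn}$.

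Finally I would reassemble the blocks. Each of the $d$ singular values of $J^n$ belongs to some Jordan block, and by the previous step its $1/n$-th power converges to the absolute value of that block's eigenvalue; hence the multiset $\{[\sigma_j(J^n)]^{1/n}\}_{j=1}^d$ converges to the multiset $\{|\lambda_j(A)|\}_{j=1}^d$. To match the ordered indices I would fix $\epsilon$ smaller than half the minimal gap between distinct values of $|\lambda(A)|$ and choose $n$ large enough that every block's $1/n$-th-power singular values lie within $\epsilon$ of the corresponding $|\lambda|$. The resulting intervals are disjoint, so sorting in descending order separates the singular values exactly according to absolute eigenvalue, giving $[\sigma_j(J^n)]^{1/n}\rightarrow|\lambda_j(A)|$ for each $j$. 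Combined with Corollary~\ref{coro:EJrelation} this establishes $\lambda_j(E^f)=|\lambda_j(A)|$.

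The main obstacle I anticipate is controlling the smallest singular value $\sigma_m(J_k^n)$ from below: the binomial expansion yields a growth rate directly only for the spectral norm, and trying to read off the smallest singular value from the entries of $J_k^n$ is unwieldy because of the competing binomial coefficients. The determinant identity circumvents this cleanly, trading an explicit lower bound for the exact value of the product of all singular values. The cross-block matching is otherwise routine, but it must be phrased through the disjoint-interval separation above to accommodate the harmless possibility that singular values sharing one absolute-value group reorder among themselves as $n$ grows.
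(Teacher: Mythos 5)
Your proof is correct, and it shares the paper's overall skeleton: both reduce to the Jordan form via Corollary~\ref{coro:EJrelation}, exploit the block-diagonal structure of $J^n$, and obtain the upper bound $\lim_n[\sigma_1(J_\lambda^n)]^{1/n}\leq|\lambda|$ from the binomial expansion $J_\lambda^n=\sum_{r=0}^{k-1}\binom{n}{r}\lambda^{n-r}N^r$ (the paper absorbs the binomial sum into $(1+\epsilon)^n$ with $\epsilon$ arbitrary, you into a polynomial factor $c\,n^{m-1}$; both vanish under the $1/n$-th power). Where you genuinely diverge is the lower bound on the smallest singular value. The paper argues by duality: it inverts the Jordan block, passes to the Jordan form $T_\lambda$ of $J_\lambda^{-1}$ (with eigenvalue $1/\lambda$), applies its own upper bound to $T_\lambda$, and converts back through the identity $\sigma_j\left(Z^{-1}\right)=1/\sigma_{k-j+1}(Z)$. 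You instead use the determinant identity $\prod_{i=1}^m\sigma_i(J_\lambda^n)=|\det J_\lambda^n|=|\lambda|^{mn}$ together with your polynomial bound on $\sigma_1$, which yields $\sigma_m(J_\lambda^n)\geq|\lambda|^n/(c\,n^{m-1})^{m-1}$ directly. Your route is more self-contained (it avoids a second appeal to Lemma~\ref{lemma:Jordansigrelation} for the inverse block and the bookkeeping around $T_\lambda$), at the cost of needing the polynomial, rather than $(1+\epsilon)^n$, form of the upper bound so that the determinant quotient still has the right exponential rate. A further point in your favor: your final reassembly step, matching the sorted singular values of $J^n$ to the sorted $|\lambda_j(A)|$ via disjoint $\epsilon$-intervals around the distinct absolute eigenvalues, makes explicit an index-matching argument that the paper passes over silently when it writes $\lim_n[\sigma_j(J^n)]^{1/n}=|\lambda_j(J)|$ from the disjoint-union property alone. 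Both proofs handle $\lambda=0$ by nilpotency, so neither division (by $|\lambda|$ in the paper, by $|\lambda|^{mn}$ implicitly in yours) is ever performed in a degenerate case.
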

\begin{proof} We consider two different cases.

\noindent \textbf{case 1: $A$ is diagonalizable.} When $J$ is diagonal
then $\sigma_j(J) = \left|\lambda_j(J)\right| =
\left|\lambda_j(A)\right|$. Recalling that $\lambda_j(J^n) =
\left[\lambda_j(J)\right]^n, \forall n$, we get
$\left[\sigma_j(J^n)\right]^{\frac{1}{n}} = \left|\lambda_j(A)\right|$
and the result follows from Corollary~\ref{coro:EJrelation}.

\noindent \textbf{case 2: $A$ is not diagonalizable.} Let
$J_{\lambda}(A)$ denote the Jordan-block of size $k \times k$
corresponding to an eigen-value $\lambda$ of $A$ of the form
\begin{equation}
\label{eq:Jordanblock} J_{\lambda}(A) \equiv
\begin{pmatrix} \lambda & 1&0&\cdots&0 \\ 0&\lambda&1&\cdots&0\\
\vdots&\vdots&\vdots&\vdots&\vdots\\ 0&0&0&\lambda&1\\ 0&0&0&0&\lambda
\end{pmatrix}.
\end{equation}
The following lemma is useful in proving Theorem~\ref{thm:eigvalcon}.
\begin{lemma}
\label{lemma:singulareigenvalueequality} For any matrix $A$ let
$J_{\lambda}(A)$ be a Jordan block corresponding to eigen-value
$\lambda$ of $A$ as defined in equation~(\ref{eq:Jordanblock}). Then the
singular-values of $J_{\lambda}(A)$ respect the following equality,
namely
\begin{equation}
\label{eq:singulareigenvalueequality}
\begin{matrix} \lim\limits_{n \rightarrow \infty}
\left[\sigma_j\left(J_{\lambda}^n\right)\right]^{\frac{1}{n}} =
|\lambda|& & j \in\{1,2,\cdots,k\},
\end{matrix}\end{equation} i.e, the \emph{limiting singular-values are
the absolute magnitude of their respective eigen-values}.
\end{lemma}
\begin{proof}
Following the standard proof technique for equality
results we individually show that
\begin{equation}
\label{eq:lessthaninequality}
\begin{matrix} \lim\limits_{n \rightarrow \infty}
\left[\sigma_j\left(J_{\lambda}^n\right)\right]^{\frac{1}{n}} \leq
|\lambda|& & j \in\{1,2,\cdots,k\}
\end{matrix}
\end{equation} and
\begin{equation}
\label{eq:greaterthaninequality1}
\begin{matrix} \lim\limits_{n \rightarrow \infty}
\left[\sigma_j\left(J_{\lambda}^n\right)\right]^{\frac{1}{n}} \geq
|\lambda|& & j \in\{1,2,\cdots,k\}.
\end{matrix}
\end{equation}

\noindent Let the Nilponent matrix $N \equiv J_{\lambda} - \lambda I$
with $N^k = \mathbf{0}$. When $n \geq k-1$ we get
\begin{equation*}\begin{matrix} J_{\lambda}^n &=& \left(\lambda I +
N\right)^n &=& \sum\limits_{r=0}^{k-1} \binom{n}{r} \lambda^{n-r}
N^r.
\end{matrix}\end{equation*} Further, the highest singular-value
$\sigma_1(N^r) = 1$ for $r \in \{0,1,\cdots,k-1\}$. If $\lambda = 0$
then $J_{\lambda}^n = \mathbf{0}$ when $n \geq k-1$ and the result is
trivially true. Suppose $\lambda \not=0$ define $\delta \equiv
\frac{1}{\lambda}$. Using the identity that for any two matrices $Z_1$
and $Z_2$, $\sigma_1(Z_1+Z_2) \leq \sigma_1(Z_1) + \sigma_1(Z_2)$
as stated in Theorem~6 of \cite{Merikoshi04}, we have
\begin{equation} \sigma_1\left(J_{\lambda}^n\right) \leq |\lambda|^n
\left[\sum_{r=0}^{k-1} \binom{n}{r} |\delta|^r \right].
\end{equation} Let $|\delta| = \epsilon \xi$ for any $0 < \epsilon
\leq |\delta|$. Then
\begin{align*} \sigma_1\left(J_{\lambda}^n\right) & \leq |\lambda|^n
\xi^k \left[\sum_{r=0}^{k-1} \binom{n}{r} \epsilon^r \right] \nonumber
\\ & \leq |\lambda|^n \xi^k \left[\sum_{r=0}^{n} \binom{n}{r}
\epsilon^r \right] = |\lambda|^n \xi^k (1+\epsilon)^n
\end{align*} Raising to the power $1/n$ and taking the limit we get
\begin{equation*} \lim_{n \rightarrow \infty}
\left[\sigma_1\left(J_{\lambda}^n\right)\right]^{\frac{1}{n}} \leq
|\lambda| (1+\epsilon).
\end{equation*} The above inequality is also true for the rest of the
singular-values as $\sigma_1(.)$ is the largest. Since $\epsilon$ is
\emph{arbitrary} the first inequality~(\ref{eq:lessthaninequality})
follows. If $\lambda=0$ we get the desired, stronger equality result in equation~(\ref{eq:singulareigenvalueequality}) as the singular-values by
definition are non-negative. It suffices to focus on the case $\lambda
\not=0$ where $J_{\lambda}$ is invertible.

\noindent To establish the reverse inequality~(\ref{eq:greaterthaninequality1}), let $T_{\lambda}$ be the Jordan
canonical form of $J_{\lambda}^{-1}$ given by
\[ T_{\lambda} \equiv
\begin{pmatrix} \frac{1}{\lambda} & 1&0&\cdots&0 \\
0&\frac{1}{\lambda}&1&\cdots&0\\ \vdots&\vdots&\vdots&\vdots&\vdots\\
0&0&0&\frac{1}{\lambda}&1\\ 0&0&0&0&\frac{1}{\lambda}
\end{pmatrix}.
\] Lemma~\ref{lemma:Jordansigrelation} entails that
\begin{equation*} \lim\limits_{n \rightarrow \infty}
\left[\sigma_j\left(\left(J_{\lambda}^{-1}\right)^n\right)\right]^{\frac{1}{n}}
= \lim\limits_{n \rightarrow \infty}
\left[\sigma_j(T_{\lambda}^n)\right]^{\frac{1}{n}}.
\end{equation*} Applying the inequality~(\ref{eq:lessthaninequality}) on
$T_{\lambda}$ gives us
\begin{equation*}\begin{matrix} \lim\limits_{n \rightarrow \infty}
\left[\sigma_j\left(T_{\lambda}^n\right)\right]^{\frac{1}{n}} \leq
\frac{1}{|\lambda|}& & j \in\{1,2,\cdots,k\}.
\end{matrix}\end{equation*} In particular,
\begin{equation*} 
\lim_{n \rightarrow \infty}
\left[\sigma_1\left(\left(J_{\lambda}^{-1}\right)^n\right)\right]^{\frac{1}{n}}
= \lim\limits_{n \rightarrow \infty}
\frac{1}{\left[\sigma_k\left(J_{\lambda}^n\right)\right]^{\frac{1}{n}}}\leq
\frac{1}{|\lambda|}
\end{equation*} 
where the equality stems from the fact that for any
invertible matrix $Z$ of size $k \times k$
\begin{equation*} 
\sigma_j\left(Z^{-1}\right) =
\frac{1}{\sigma_{k-j+1}\left(Z\right)}.
\end{equation*} We then get
\begin{equation}
\label{eq:greaterthaninequality2} \lim\limits_{n \rightarrow \infty}
\left[\sigma_k(J_{\lambda}^n)\right]^{\frac{1}{n}} \geq |\lambda|.
\end{equation} Since $\sigma_k(.)$ is the smallest singular-value the
inequality~(\ref{eq:greaterthaninequality2}) is also valid for the
rest.
\end{proof}

\noindent Now to prove Theorem~\ref{thm:eigvalcon} note that for any $n$
\[ J^n = \begin{pmatrix} J_{\lambda_1}^n&0&\cdots&0\\
0&J_{\lambda_2}^n&\cdots&0\\ \vdots&\vdots&\ddots&\vdots\\
0&0&\cdots&J_{\lambda_l}^n
\end{pmatrix}
\] is a block diagonal matrix and the eigen-(singular) values of $J^n$
equals the \emph{disjoint union} of eigen-(singular) values of
individual Jordan blocks $J_{\lambda_1}^n, \cdots,
J_{\lambda_l}^n$. In accordance with Corollary~\ref{coro:EJrelation}
and Lemma~\ref{lemma:singulareigenvalueequality} we find $\forall j
\in \{1,2,\ldots,d\}$, 
\begin{equation*}
\lambda_j(E^f) = \lim\limits_{n \rightarrow
\infty} \left[\sigma_j(J^n)\right]^{\frac{1}{n}} =
\left|\lambda_j(J)\right| = \left|\lambda_j(A)\right|.
\end{equation*}
\end{proof}

\section{Eigen-spaces and Lyapunov vectors of linear autonomous
systems}
\label{sec:eigspaceLypVecAut} By a suitable coordinate transformation,
namely $\z_{n} = V_A^{-1} \x_{n}$, studying the dynamics $\x_{n+1} = A
\x_{n}$ is tantamount to investigating $\z_{n+1} = J \z_{n}$ where $J
= V_A^{-1}A V_A$ is the Jordan canonical form of $A$.  Indeed,
\begin{align*} \z_{n+1} &= J \z_n = V_A^{-1} A V_A V_A^{-1} \x_n =
V_A^{-1} \x_{n+1}.
\end{align*} Corresponding to the definitions of the matrices $E^f_n$
and $E^f$ in equations~\eqref{eq:EbnEfnauto}-\eqref{eq:defEf}, let
$G_n \equiv \left[ (J^n)^\ast J^n \right]^{\frac{1}{2n}}$ and let $G
\equiv \lim\limits_{n \rightarrow \infty} G_n$.

We consider the two systems in the different $d$ dimensional spaces
$\RA$ and $\CJ$ where the underlying propagators are $A$ and $J$
respectively. Note that as the matrix $V_A$ might be complex (though
$A$ is real) the dynamics for the propagator $J$ is examined in a
complex state space.
\begin{lemma}
\label{lem:idendityeigenvector} If the scalar product in $\CJ$ is the
canonical one namely, $\langle \u,\v \rangle_{J} = \u^{\dagger} \v$,
then $V_G = I_d$ where $I_d$ is the $d \times d$ identity matrix.
\end{lemma}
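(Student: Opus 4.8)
The plan is to combine the block-diagonal structure of the Jordan form $J$ with Lemma~\ref{lemma:singulareigenvalueequality}, which already pins down the limiting singular values inside each Jordan block, while exploiting that the canonical scalar product renders $G$ an honest Hermitian matrix.

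First I would record that, because $\langle\u,\v\rangle_{J} = \u^{\dagger}\v$, the adjoint appearing in the definition of $G_n$ is just the conjugate transpose, so $G_n = \left[(J^n)^{\dagger} J^n\right]^{1/(2n)}$ is Hermitian and positive semidefinite in the usual sense; hence so is its limit $G$. Writing $J = \mathrm{diag}(J_{\lambda_1},\dots,J_{\lambda_l})$ as a direct sum of Jordan blocks, the matrix $J^n$ and therefore $(J^n)^{\dagger} J^n$ are block diagonal with the same block sizes $k_i$, and since the positive-semidefinite power $(\cdot)^{1/(2n)}$ acts blockwise on a block-diagonal matrix, each $G_n$---and thus $G$---inherits this block-diagonal form. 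Denote by $G^{(i)}$ the $i$-th diagonal block of $G$.

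Next I would identify each $G^{(i)}$ explicitly. The eigenvalues of the Hermitian matrix $G_n^{(i)} = \left[(J_{\lambda_i}^n)^{\dagger} J_{\lambda_i}^n\right]^{1/(2n)}$ are precisely $\left[\sigma_j(J_{\lambda_i}^n)\right]^{1/n}$ for $j = 1,\dots,k_i$, and Lemma~\ref{lemma:singulareigenvalueequality} asserts that every one of these converges to $|\lambda_i|$ as $n\to\infty$. By continuity of the spectrum under $G_n^{(i)}\to G^{(i)}$, the limit $G^{(i)}$ is a Hermitian matrix all of whose eigenvalues equal $|\lambda_i|$. Here the Hermitian property is decisive: a Hermitian matrix whose eigenvalues are all equal is forced by the spectral theorem to be a scalar multiple of the identity, so $G^{(i)} = |\lambda_i| I_{k_i}$. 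Assembling the blocks, $G = \mathrm{diag}\left(|\lambda_1|I_{k_1},\dots,|\lambda_l|I_{k_l}\right)$ is diagonal, so the standard basis furnishes a unit-magnitude orthonormal eigenbasis and we may take $V_G = I_d$, as claimed.

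The routine parts are the persistence of block-diagonality through both the matrix power and the limit, which only require that the functional calculus commutes with a direct-sum decomposition. The crux of the argument---and the only place the canonical-scalar-product hypothesis is genuinely used---is the passage from ``all limiting singular values inside a block coincide'' (Lemma~\ref{lemma:singulareigenvalueequality}) to ``the corresponding block of $G$ is literally $|\lambda_i| I_{k_i}$'', an upgrade that is exactly what Hermitianity buys and which would fail under a non-canonical product, where $G$ is only self-adjoint with respect to a different inner product.
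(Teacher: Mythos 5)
Your proof is correct and follows essentially the same route as the paper's: both decompose $G$ blockwise along the Jordan blocks of $J$, invoke Lemma~\ref{lemma:singulareigenvalueequality} (the paper does so via Theorem~\ref{thm:eigvalcon}) to conclude that each block of $G$ is Hermitian with all eigenvalues equal to $|\lambda_i|$ and hence a scalar matrix, and then assemble the blocks to obtain $V_G = I_d$. The only cosmetic difference is that the paper treats the diagonalizable case separately, whereas your unified Jordan-block argument subsumes it.
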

\begin{proof}
We find it convenient to handle the following scenarios
separately.

\noindent \textbf{case 1: $A$ is diagonalizable.} $J$ is diagonal and so is $J^n$.  In the canonical inner product setting the entries of the diagonal
$G_n$ are the absolute magnitude entries of $J$. It follows that $G$
is diagonal and $V_G = V_J = I_d$.

\noindent \textbf{case 2: $A$ is not diagonalizable.} As before,
consider the Jordan block $J_{\lambda}$ given in equation~(\ref{eq:Jordanblock}) of size $k \times k$ corresponding to the
eigen-value $\lambda$. Define $G_{\lambda} \equiv \lim\limits_{n
\rightarrow \infty} \left[(J_{\lambda}^n)^{\ast}
J_{\lambda}^n\right]^{\frac{1}{2n}}$. Since $G_{\lambda}$ is symmetric
it is diagonalizable and by Theorem~\ref{thm:eigvalcon} we have
$\lambda_j\left(G_{\lambda}\right) = |\lambda|, \forall j \in
\{1,2,\cdots,k\}$. As all the eigen-values of $G_{\lambda}$ are
equal, it is a scalar matrix and therefore
we can choose $V_{G_{\lambda}} = I_k$. Since
\[ G = \begin{pmatrix} 
G_{\lambda_1}&0&\cdots&0\\
0&G_{\lambda_2}&\cdots&0\\ \vdots&\vdots&\ddots&\vdots\\
0&0&\cdots&G_{\lambda_l}
\end{pmatrix}
\] the result follows.
\end{proof}
\begin{lemma}
\label{lem:eigenvectorrelation} Under the definition of the scalar
products $\langle \u,\v \rangle_{J} = \u^{\dag} V_A^{\dagger} V_A \v$
in $\CJ$ and $\langle \u,\v \rangle_{A} = \u^T \v$ in $\RA$, $V_G =
V_A^{-1} V_{E^f}$.
\end{lemma}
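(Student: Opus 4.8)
The plan is to reduce everything to the similarity relation $J^n = V_A^{-1} A^n V_A$ (already noted in Appendix~\ref{sec:eigvalSigvalAut}) by first making explicit the adjoint induced on $\CJ$ by the weighted scalar product. Writing $M \equiv V_A^{\dagger} V_A$, so that $\langle \u,\v\rangle_J = \u^{\dagger} M \v$, I would impose $\langle Z\u,\v\rangle_J = \langle \u, Z^{\ast}\v\rangle_J$ for all $\u,\v$ and read off the matrix identity
\begin{equation*}
Z^{\ast} = M^{-1} Z^{\dagger} M,
\end{equation*}
valid for any operator $Z$. This is the single place where the non-canonical geometry of $\CJ$ enters, and it is what distinguishes this computation from Lemma~\ref{lem:idendityeigenvector}.

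Applying this to $Z = J^n$ gives $(J^n)^{\ast} J^n = M^{-1} (J^n)^{\dagger} M J^n$, and the second step is to substitute $J^n = V_A^{-1} A^n V_A$ and let the products telescope. One finds $M J^n = V_A^{\dagger} A^n V_A$ and $(J^n)^{\dagger} M = V_A^{\dagger} (A^n)^{\dagger} V_A$, while the outer $M^{-1} = V_A^{-1}(V_A^{\dagger})^{-1}$ cancels the leading $V_A^{\dagger}$, yielding the clean conjugacy
\begin{equation*}
(J^n)^{\ast} J^n = V_A^{-1} (A^n)^{\dagger} A^n V_A = V_A^{-1} (A^n)^{\ast} A^n V_A = V_A^{-1}\left(E^f_n\right)^{2n} V_A,
\end{equation*}
where I use that $A$ is real, so $(A^n)^{\dagger} = (A^n)^{\ast}$ under the canonical product on $\RA$, together with the definition of $E^f_n$ in equation~(\ref{eq:EbnEfnauto}).

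It then remains to take the $\tfrac{1}{2n}$-th power and pass to the limit. Since $\left(E^f_n\right)^{2n}$ is symmetric positive-definite, the matrix $V_A^{-1}\left(E^f_n\right)^{2n}V_A$ is similar to it, hence diagonalizable with the same positive spectrum, and its positive operator root is simply the similarity transform of the root, so $G_n = V_A^{-1} E^f_n V_A$. Diagonalizing $E^f_n = V_{E^f_n}\Lambda_{E^f_n}V_{E^f_n}^{-1}$ then gives $G_n = \left(V_A^{-1}V_{E^f_n}\right)\Lambda_{E^f_n}\left(V_A^{-1}V_{E^f_n}\right)^{-1}$, whence $V_{G_n} = V_A^{-1}V_{E^f_n}$; one checks in passing that these columns are orthonormal in $\langle\cdot,\cdot\rangle_J$, consistent with $G_n$ being self-adjoint on $\CJ$. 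Taking $n\to\infty$ and invoking the convergence $V_{E^f_n}\to V_{E^f}$ (the eigenvector-convergence argument of Section~\ref{sec:oseledet} applied to $E^f_n\to E^f$ from equation~(\ref{eq:defEf})), with $V_A^{-1}$ a fixed matrix, produces $V_G = V_A^{-1} V_{E^f}$.

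The main obstacle is the bookkeeping in the second step: tracking the weighted adjoint correctly through the conjugation so that the cancellations leave exactly $(A^n)^{\ast}A^n$, and justifying that the $\tfrac{1}{2n}$-th operator root commutes with conjugation by $V_A$. The latter is safe precisely because conjugation by a fixed invertible matrix preserves both diagonalizability and the (positive) spectrum, so the unique positive root of the similar matrix is the similarity transform of $E^f_n$; everything else is routine limit-passing.
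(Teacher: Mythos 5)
Your proof is correct and follows essentially the same route as the paper: both compute the weighted adjoint $(J^n)^{\ast} = V_A^{-1}(A^n)^T V_A$, reduce $(J^n)^{\ast}J^n$ to $V_A^{-1}(A^n)^T A^n V_A = V_A^{-1}\left(E^f_n\right)^{2n}V_A$, extract $V_{G_n} = V_A^{-1}V_{E^f_n}$ via the diagonalization of $E^f_n$, and pass to the limit. Your added care in justifying that the positive $\tfrac{1}{2n}$-th root commutes with conjugation by $V_A$ is a point the paper leaves implicit, but it is the same argument.
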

\begin{proof} For the aforesaid considerations of the scalar products
in $\CJ$ and $\RA$, $J^{\ast} = \left(V_A^{\dag} V_A \right)^{-1}
J^{\dagger} V_A^{\dag}V_A$ and $A^{\ast} = A^T$ respectively. Recalling that $J = V_A^{-1}A
V_A$ we have
\begin{align*} 
(J^{n})^{\ast} &= \left( V_A^{\dag} V_A \right)^{-1}
V_A^\dag \left(A^n\right)^T \left(V_A^{-1} \right)^\dag V_A^{\dag}V_A
=V_A^{-1} \left(A^n\right)^T V_A \\
\Rightarrow
G_n &= \left[V_A^{-1} \left(A^n\right)^T V_A
V_A^{-1}A^n V_A\right]^{\frac{1}{2n}} = \left[V_A^{-1}
\left(A^n\right)^T A^n V_A\right]^{\frac{1}{2n}}.
\end{align*}
As
$\left(E^f_n\right)^{2n} = \left(A^n\right)^T A^n$ is symmetric, it is
diagonalizable by an orthonormal matrix $V_{E^f_n}$ and carries a
representation $\left(E^f_n\right)^{2n}= V_{E^f_n}
(\Lambda_{E^f_n})^{2n} V_{E^f_n}^T$. We find $ \Lambda_{G_n} =
\Lambda_{E_n}$ and $V_{G_n} = V_A^{-1}V_{E_n},\forall n$ and the
result follows by letting $n \rightarrow \infty$.
\end{proof}

Recall the real span $\T_\w$ from Definition~\ref{def:realspan} bearing in mind the complex generalized
eigenvectors of any matrix $Z$ always occur in conjugate pairs $\{\w,
\overline{\w} \}$ with $\T_{\w} = \T_{\overline{\w}}$. We have the
following theorem, namely
\begin{theorem}[Eigenspace equality]
\label{thm:eigspaceequality} For any matrix $A$ let the matrix $E^f$
be defined as in equation~(\ref{eq:defEf}). Then for any $\alpha \geq 0$ the
corresponding $\alpha$-eigenspaces of $E^f$ and $A$ are the same, i.e,
$\Ea\left(E^f\right) = \Ea(A)$. Equivalently, $\Ea\left(E^b\right) = \Ea\left(A^T\right)$.
\end{theorem}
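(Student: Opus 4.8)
The plan is to prove the forward identity $\Ea(E^f)=\Ea(A)$ first and obtain $\Ea(E^b)=\Ea(A^T)$ as a corollary. In the autonomous setting $B_{0:n}=A^n$, so $E^b=\lim_n\left[A^n (A^n)^T\right]^{\frac{1}{2n}}$, and since $(A^T)^n=(A^n)^T$ this coincides with the forward limit matrix built from $A^T$, namely $E^f(A^T)=\lim_n\left[\left((A^T)^n\right)^T (A^T)^n\right]^{\frac{1}{2n}}$. Hence applying the forward identity with $A^T$ in place of $A$ immediately yields $\Ea(E^b)=\Ea(A^T)$, and I need only treat the forward case.

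For the forward identity I would compare the two subspaces through growth rates. By Theorem~\ref{thm:eigvalcon} the eigenvalues of the symmetric matrix $E^f$ are exactly $|\lambda_j(A)|$, so its orthonormal eigenvectors are the forward Lyapunov vectors $\l^f_j(0)$ and $\Ea(E^f)=\mathrm{span}\{\l^f_j(0):|\lambda_j(A)|<\alpha\}$. The key input is the sharp growth rate of these vectors: setting $B_{0:n}=A^n$ in equation~(\ref{eq:decay}) and using $\mu_j=\log|\lambda_j(A)|$ (equation~(\ref{def:LyapunovExp}) with Theorem~\ref{thm:eigvalcon}) gives $\lim_n\frac{1}{n}\log\|A^n\l^f_j(0)\|=\log|\lambda_j(A)|$. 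Then for any $u=\sum_{|\lambda_j|<\alpha}c_j\l^f_j(0)\in\Ea(E^f)$ the triangle inequality yields $\limsup_n\|A^n u\|^{1/n}\leq\max_{c_j\neq0}|\lambda_j(A)|<\alpha$, with no cancellation issue since this is an upper estimate.

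Next I would establish the purely algebraic description of $\Ea(A)$ as a growth filtration: a nonzero real $u$ lies in $\Ea(A)$ if and only if $\limsup_n\|A^n u\|^{1/n}<\alpha$. The forward direction uses $\|A^n v\|^{1/n}\to|\lambda|$ for a generalized eigenvector $v$ of eigenvalue $\lambda$, which follows from the binomial expansion on each Jordan block contributing only polynomial factors, exactly as in Lemma~\ref{lemma:singulareigenvalueequality}. The converse follows by decomposing $u$ over the complex generalized eigenspaces, which form an $A$-invariant splitting, so a surviving $|\lambda|\geq\alpha$ component cannot be masked. Granting this description, the previous paragraph shows $\Ea(E^f)\subseteq\Ea(A)$, and I upgrade to equality by a dimension count: $\dim_{\mathbb{R}}\Ea(E^f)=\#\{j:|\lambda_j(A)|<\alpha\}$, while the same count holds for $\Ea(A)$ since real eigenvalues contribute their multiplicities and each conjugate pair $\lambda,\overline{\lambda}$ contributes a real span $\T_{\w}$ of the matching real dimension.

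I expect the main obstacle to be the converse of the growth-rate characterization when $A$ is non-diagonalizable with complex spectrum, i.e. ruling out that a genuine $|\lambda|\geq\alpha$ mode is cancelled. I would dispatch this by writing the $A$-invariant splitting $\mathbb{C}^d=\Ea(A)_{\mathbb{C}}\oplus\mathcal{R}$, where $\mathcal{R}$ collects the $|\lambda|\geq\alpha$ blocks, and bounding $\|A^n u\|\geq c\,\|A^n u_{\mathcal{R}}\|$ via the fixed projection onto $\mathcal{R}$, with $\|A^n u_{\mathcal{R}}\|^{1/n}\to\max_{\mathcal{R}}|\lambda|\geq\alpha$. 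As a cleaner route that leans on the machinery already built, Lemma~\ref{lem:eigenvectorrelation} supplies the similarity $G=V_A^{-1}E^f V_A$, so $\Ea(E^f)=V_A\,\Ea(G)$ while $\Ea(A)=V_A\,\Ea(J)$; since the growth filtration of $J$ on $\CJ$ does not depend on the choice of equivalent norm, the filtration $\Ea(G)$ computed in the weighted product agrees with the one computed in the canonical product, which by Lemma~\ref{lem:idendityeigenvector} is precisely the block filtration $\Ea(J)$, again giving the claim.
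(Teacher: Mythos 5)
Your proposed proof is correct, and your primary argument takes a genuinely different route from the paper's. The paper never characterizes $\Ea(A)$ by growth rates; instead it passes to Jordan coordinates, equips $\CJ$ with two scalar products (the canonical one and $\langle \u,\v\rangle_J=\u^\dagger V_A^\dagger V_A \v$), identifies the eigenvectors of the limit matrix $G$ under each via Lemma~\ref{lem:idendityeigenvector} ($V_G=I_d$) and Lemma~\ref{lem:eigenvectorrelation} ($V_G=V_A^{-1}V_{E^f}$), and then invokes the fact that the Oseledets embedded subspaces $\F_j$ are independent of the scalar product, so that pushing both eigenvector families forward by $V_A$ forces the span of the generalized eigenvectors of $A$ and the span of the eigenvectors of $E^f$ below level $\alpha$ to coincide. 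That is exactly the ``cleaner route'' of your closing paragraph, so your backup plan essentially rediscovers the paper's proof. Your main route --- identify both $\Ea(E^f)$ and $\Ea(A)$ with the growth filtration $\{\u : \limsup_n \| A^n\u\|^{1/n}<\alpha\}$, obtain $\Ea(E^f)\subseteq\Ea(A)$ from the exact rates of the forward Lyapunov vectors (equation~(\ref{eq:decay}) together with Theorem~\ref{thm:eigvalcon}) and Jordan-block estimates, then upgrade to equality by a real dimension count using the conjugate-pair structure of $\T_{\w}$ --- is more elementary and self-contained: it avoids the weighted inner product, the complex state space $\CJ$, and both lemmas, at the price of proving the converse growth characterization through the $A$-invariant splitting $\mathbb{C}^d=\Ea(A)_{\mathbb{C}}\oplus\mathcal{R}$ and a fixed projection. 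Two harmless imprecisions to clean up in a write-up: (i) $\| A^n \u_{\mathcal{R}}\|^{1/n}$ converges to the largest modulus among eigenvalues actually present in the decomposition of $\u_{\mathcal{R}}$, not to $\max_{\mathcal{R}}|\lambda|$; what you need, and what follows by applying $(A|_{\mathcal{R}})^{-n}$ and noting that every eigenvalue of $A|_{\mathcal{R}}$ has modulus at least $\alpha$, is only $\liminf_n\| A^n\u_{\mathcal{R}}\|^{1/n}\geq\alpha$; (ii) equation~(\ref{eq:decay}) is stated in the paper only for the stable indices $j>d_0$, whereas you need it for every $j$ with $|\lambda_j(A)|<\alpha$, which can include unstable indices when $\alpha>1$ --- this is fine because the Oseledets statement holds for all $j$, but it deserves an explicit remark.
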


\begin{proof} By Theorem~\ref{thm:eigvalcon} we have $\lambda_j(G) =
|\lambda_j(J)| = |\lambda_j(A)| = \lambda_j(E^f)$. Recall that the
eigen-values are ordered with $\lambda_1(G)$ and $\lambda_d(G)$ being
the largest and the smallest respectively. Oseledets theorem states
that there exits a sequence of embedded subspaces 
\begin{equation*}
0 \subset \F_d \subset \F_{d-1} \subset \cdots \subset \F_1 = \CJ
\end{equation*}
such that on the complement $\F_j \backslash \F_{j+1}$
of $\F_{j+1}$ in $\F_j$ the growth rate is at most $\lambda_j(G)$
\cite{Oseledets68}. The subspaces $\F_j$ can be obtained as the direct sum
of the eigenvectors $\v_j(G)$ as 
\begin{equation*}
\F_j = \v_d(G) \oplus \v_{d-1}(G) \oplus \cdots \oplus \v_j(G)
 \end{equation*}
where $\v_j(G)$ is the eigenvector of $G$ corresponding
to $\lambda_j(G)$. Further, though the eigenvectors of $G$ depend on
the underlying scalar product in $\CJ$, the embedded subspaces $\F_j$
and the eigen-values $\lambda_j(G)$ are \emph{independent} of it
\cite{Legras96}. 

Corresponding to the two inner-product definitions in $\CJ$,
specifically $\langle \u,\v \rangle_{J} =\u^{\dagger} \v$ and $ \langle \u,\v \rangle_{J} = \u^{\dagger}
V_A^{\dagger} V_A \v$ we denote the respective eigenvectors with
the superscript symbols $1$ and $2$. By
Lemma~\ref{lem:idendityeigenvector} we have $V_{G}^1 = I_d =
V_A^{-1}V_A$ and Lemma~\ref{lem:eigenvectorrelation} declares that
$V_G^2 = V_A^{-1}V_{E^f}$ where $V_{E^f}$ is computed using the
canonical inner product in $\RA$. For the given $\alpha$ let $q =
\argmin_j \lambda_j(G) \leq \alpha$. The invariance of the embedded
subspace $\F_q$ to the underlying
scalar product signifies that the real span of the vectors $\{V_A
\v_d^1(G),\cdots, V_A \v_q^1(G) \}$ equal the real span of the vectors
$\{V_A \v_d^2(G),\cdots, V_A \v_q^2(G) \}$. As $\forall j
\in \{1,2,\cdots,d\}, V_A \v_j^1(G) = \v_j(A)$ and $V_A \v_j^2(G) =
\v_j(E^f)$, the result follows.
\end{proof}
\bibliographystyle{siam} \bibliography{RankDeficiency_KalmanFilter_MajorRevision-KG}
\end{document}